\newcommand{\assafsays}[1]{\marginpar{\textbf{A:} #1}}
\newcommand{\assafsays}[1]{}
\newtheorem{prop}{Proposition}[section]
\newtheorem{thm}[prop]{Theorem}
\newtheorem{cor}[prop]{Corollary}
\newtheorem{lem}[prop]{Lemma}
\theoremstyle{definition}
\newtheorem{defn}[prop]{Definition}
\newtheorem{remark}[prop]{Remark}
\newtheorem*{remarkx}{Remark}
\def\al{\alpha}
\def\be{\beta}
\def\de{\delta}
\def\ga{\gamma}
\def\De{\Delta}
\def\Ga{\Gamma}
\def\la{\lambda}
\def\si{\sigma}
\def\Si{\Sigma}
\def\th{\theta}
\def\vp{\varphi}
\def\C{\mathcal{C}}
\def\F{\mathcal{F}}
\def\I{\ensuremath{\mathcal{I}}}
\def\M{\ensuremath{\mathcal{M}}}
\def\O{\ensuremath{\mathcal{O}}}
\def\X{\ensuremath{\mathcal{X}}}
\def\RR{\ensuremath{\mathbb{R}}}
\def\ZZ{\ensuremath{\mathbb{Z}}}
\def\Ab{\mathbf{Ab}}
\def\spaces{\textbf{Spaces}}
\def\aut{\operatorname{aut}}
\def\Aut{\operatorname{Aut}}
\def\coker{\operatorname{coker}}
\def\colim{\operatorname{colim}}
\def\ev{\operatorname{ev}}
\def\Ext{\operatorname{Ext}}
\newcommand{\hholim}[1]{\underset{#1}{\operatorname{holim}}}
\def\hocolim{\operatorname{hocolim}}
\def\Hom{\mathrm{Hom}}
\def\id{\operatorname{id}}
\def\incl{\operatorname{incl}}
\newcommand{\llim}[1]{\underset{#1}{\operatorname{lim}}}
\def\map{\operatorname{map}}
\newcommand\mdl[1]{#1\operatorname{-mod}}
\def\op{\ensuremath{\mathrm{op}}}
\def\sd{\operatorname{sd}}
\def\Tor{\operatorname{Tor}}
\def\rk{\ensuremath{rk}}
\def\St{\operatorname{St}}
\newcommand{\ccolim}[1]{\underset{#1}{\operatorname{colim}}}
\newcommand{\hhocolim}[1]{\underset{#1}{\hocolim}}
\newcommand{\ul}[1]{\underline{#1}}
\newcommand{\xto}[1]{\xrightarrow{#1}}
\title[Self equivalences of linear spheres]{On the homotopy groups of the self equivalences of linear spheres}
\author{Assaf Libman}
\address{Institute of Mathematics, University of Aberdeen, Fraser Noble Building, Aberdeen AB24 3UE, U.K.}
\email{a.libman@abdn.ac.uk}
\renewcommand{\theenumi}{(\arabic{enumi})}
\renewcommand{\labelenumi}{(\arabic{enumi})}
\begin{document}

\begin{abstract}
Let $S(V)$ be a complex linear sphere of a finite group $G$. %the space of unit vectors in a complex representation $V$ of a finite group $G$.
Let $S(V)^{*n}$ denote the $n$-fold join of $S(V)$ with itself and let $\aut_G(S(V)^*)$ denote the space of $G$-equivariant self homotopy equivalences of $S(V)^{*n}$.
We show that for any $k \geq 1$ there exists $M>0$ which depends only on $V$ such that $|\pi_k \aut_G(S(V)^{*n})| \leq M$ is for all $n \gg 0$.
\end{abstract}

\maketitle

\section{Introduction}

Let $\aut(X)$ denote the space of self homotopy equivalences of a space $X$ with the identity as a basepoint.
If $X$ is a $G$-space we write $\aut_G(X)$ for the equivariant self equivalences.
It is well known that for any $k \geq 1$ the sequence of groups 
\[
\pi_k \aut(S^0)  %\xto{\pi_k(\vp \mapsto \Si \vp)} 
\to \pi_k \aut(S^1) \to \dots \to 
\pi_k\aut(S^n) \xto{\pi_k(\vp \mapsto \Si \vp)} \pi_k \aut(S^{n+1}) \to \dots
\]
stabilizes on a finite group, namely for all $n \gg 0$ all the arrows become isomorphisms of finite groups.
The stable group is the stable $k$-homotopy group of $S^0$.
This can be deduced from the fibration $\Omega^nS^n \to \map(S^n,S^n) \to S^n$ together with \cite[19.1.2]{sln1662}) and the classical fact that the groups $\pi_{n+k}S^n$ stabilize on $\pi_k^S$. 
% This can be deduced from the fibration $\Omega^nS^n \to \map(S^n,S^n) \to S^n$, the fact that (see e.g. \cite[19.1.2]{sln1662}) the component of $\id_{S^n}$ in $\aut(S^n)$ is homotopy equivalent to the component of the constant map in $\Omega^n S^n$, and the classical fact that the groups $\pi_{n+k}S^n$ stabilize on $\pi_k^S$. 

Equivariantly, one replaces $S^0$ with a sphere $X$ on which a finite group $G$ acts.
Let $X^{*n}$ denote the $n$-fold join of $X$ with itself equipped with the natural action of $G$.
We obtain a sequence of spaces
\[
%\begin{equation}\label{eq stabilization intro}
\aut_G(X) \to \dots \to %\xto{\pi_k(\vp \mapsto \vp*1_X)} 
%\to \pi_k \aut_G(X*X) \to \dots \to 
\aut_G(X^{*n}) \xto{\vp \mapsto \vp*1_X} \aut_G(X^{*(n+1)}) \to \dots
%\end{equation}
\]
If the $G$-sphere $X$ has a fixed point, the stabilization of $\{\pi_k\aut_G(X^{*n})\}_n$ where $k \geq 1$ can be deduced from the results of Hauschild \cite[Satz 2.4]{Hauschild}.
In the absence of fixed points, the problem is much harder.

A complex representation $V$ of a finite group $G$ admits a $G$-invariant scalar product, unique up to equivalence, and the subspace $S(V)$ of $V$ consisting of the unit vectors is called a \emph{linear sphere}.
%Let $V$ be a complex representation of a finite group $G$.
%Then $V$ admits a $G$-invariant scalar product and the subspace $S(V)$ of $V$ consisting of the vectors of length $1$ is called a \emph{linear sphere}.
The stabilization of $\{\pi_k\aut_G(S(V)^{*n})\}_n$ was established if $G$ is cyclic by Schultz \cite[Proposition 6.5]{Schultz73II}, or if $G$ acts freely on $S(V)$ by Becker and Schultz \cite{BeSch}.
The finiteness of the groups $\pi_k \aut_G(S(V)^{*n})$ for all sufficiently large $n$ was proven by Klaus \cite[Proposition 2.5]{Klaus} and \"Unl\"u-Yalcin \cite[Theorem 3.1]{UY12}.

%%%
%%%
%%%
\begin{defn}\label{def ess bdd}
A sequence of (abelian) groups $E_1,E_2,\dots$ is called \emph{essentially bounded} if there exists some $M >0$ such that $|E_n|\leq M$ for all $n \gg 0$.
\end{defn}

The main result of this paper is the following theorem.
It has its origin and motivation in the problem of constructing free actions of finite groups on products of spheres, see \cite{UY12}, \cite{Klaus}.
We will prove it in Section \ref{sec main result}.

%%%
%%%
%%%
\begin{thm}\label{thm main them aut}
Let $V$ be a complex representation of a finite group $G$.
Then for any $k \geq 1$ the sequence of groups
$\{\pi_k \aut_G(S(V)^{*n})\}_{n\geq 1}$ is essentially bounded.
\end{thm}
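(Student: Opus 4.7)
The plan is to pass from $\aut_G$ to the based equivariant mapping space, apply an equivariant Federer-type spectral sequence, and verify that only finitely many essentially bounded $E_2$-contributions can affect $\pi_k$ for each fixed $k$.

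First, identify $S(V)^{*n}$ with the unit sphere $S(nV)$ of $nV=V^{\oplus n}$, so the stabilization map is induced by $nV\hookrightarrow (n+1)V$. Since $\aut_G(S(nV))$ is a union of path-components of $\map_G(S(nV),S(nV))$ containing $\id$, for every $k\geq 1$ there is a natural isomorphism
\[
\pi_k\aut_G(S(V)^{*n})\;\cong\;\pi_k\bigl(\map_G(S(nV),S(nV)),\,\id\bigr),
\]
and it suffices to essentially bound the right-hand side.

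I would then invoke the equivariant Federer spectral sequence
\[
E_2^{s,t}=H^s_G\bigl(S(nV);\,\underline{\pi}_t S(nV)\bigr)\;\Longrightarrow\;\pi_{t-s}\bigl(\map_G(S(nV),S(nV)),\,\id\bigr),
\]
with Bredon coefficient system $G/H\mapsto\pi_tS(nV^H)$. For each isotropy subgroup $H$ the fixed sphere $S(nV^H)$ has dimension $2n\dim_\CC V^H-1$, so in the stable range $\pi_tS(nV^H)$ equals the classical stable stem $\pi^S_{t-(2n\dim_\CC V^H-1)}$: a finite abelian group that, once $t-s=k$ is fixed, depends only on $k$ and the orbit type. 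Since $V$ has only finitely many conjugacy classes of isotropy subgroups, the coefficient system takes values in a uniformly bounded collection of finite abelian groups.

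The main obstacle is to show that $H^s_G(S(nV);\underline{M})$ is essentially bounded in $n$ for any such coefficient system $\underline{M}$, and that only boundedly many bidegrees $(s,t)$ with $t-s=k$ can be nonzero. For this I would induct on the isotropy lattice of $V$ via the cofiber sequence $S(nV^H)\hookrightarrow S(nV)\to S(nV)/S(nV^H)$ attached to a minimal non-trivial $V^H$: the quotient is the equivariant Thom space of the orthogonal complement, so an equivariant Thom isomorphism reduces its Bredon cohomology to a shifted copy of the Bredon cohomology of a smaller-isotropy problem on $V/V^H$. Heuristically each stratum contributes a single stable stem via its top cell, so the $E_2$-entries with $t-s=k$ form a finite product of stable stems indexed by conjugacy classes of orbit types --- bounded uniformly in $n$; bounded differentials and filtration extensions can only produce subquotients. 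Controlling the Thom-isomorphism bookkeeping uniformly as one climbs the isotropy lattice is the delicate point, and is likely what distinguishes a uniform bound from the finiteness already known \cite{Klaus,UY12}.
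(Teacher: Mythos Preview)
Your outline has the right ingredients (a Federer-type spectral sequence and an induction over the isotropy lattice), but the specific argument you sketch has a genuine gap at the point you yourself flag as ``delicate.''

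The claim that the coefficient system $\underline{\pi}_t S(nV)$ ``takes values in a uniformly bounded collection of finite abelian groups'' is not correct as stated. At $G/H$ the value is $\pi_t S(nV^H)$; for $t = 2n\dim_\CC V^H - 1$ this is $\ZZ$, and in general it depends on $t$, not merely on $k=t-s$ and the orbit type. More seriously, the number of bidegrees $(s,t)$ with $t-s=k$ for which $E_2^{s,t}$ can be nonzero is \emph{not} bounded in $n$: one has $0 \leq s \leq 2n\dim_\CC V - 1$ while $\underline{\pi}_t S(nV)$ is nonzero once $t \geq 2n\min_H \dim_\CC V^H - 1$, so roughly $2n(\dim_\CC V - \min_H \dim_\CC V^H)$ bidegrees contribute to $\pi_k$. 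Showing that each of these $O(n)$ groups is essentially bounded, \emph{and} that their iterated extensions stay bounded, is exactly the problem; your Thom-space heuristic does not do this, since the equivariant Thom space of the normal bundle to a singular stratum is not a representation sphere but has its own singular strata.

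The paper sidesteps this by reversing the order of your two steps. It first filters $X_n=S(nV)$ by orbit type and, via a pushout square, identifies the successive fibres of $\map_G(F_s(X_n),X_n)\to\map_G(F_{s-1}(X_n),X_n)$ with fibres of $\map_W(B_n,Y_n)\to\map_W(D_n,Y_n)$, where $W=N_GK/K$ acts \emph{freely} on the polytopes $B_n,D_n$ and $Y_n=X_n^K$ is a \emph{single} sphere of dimension $N$. Only then is the Federer spectral sequence invoked (Theorem~\ref{thm federer ss}), and now the coefficients are the trivial $W$-modules $\pi_q Y_n$, vanishing for $q<N$; together with $\dim B_n\leq N$ this confines the contributions to $\pi_k$ to exactly $k+1$ bidegrees. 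The essential boundedness of those $k+1$ Bredon cohomology groups --- and of the kernel/cokernel of the map between the two spectral sequences at the one bidegree involving $\pi_N Y_n=\ZZ$ --- is then established via an equivariant Lefschetz duality (Proposition~\ref{prop:lefshetz duality}) and the key Lemma~\ref{lemma key}. That lemma is where the real work lies, and your sketch does not supply a substitute for it.
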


\begin{cor}
For any $k \geq 1$ the group $\varinjlim_n \, \pi_k \aut_G(S(V)^{*n})$ is finite.
%As an immediate corollary we get that for any $k \geq 1$ the limit group
%\[
%\varinjlim_n \, \pi_k \aut_G(S(V)^{*n}) 
%\]
%is finite. 
\end{cor}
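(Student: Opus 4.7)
The plan is to deduce the corollary directly from Theorem~\ref{thm main them aut} combined with the finiteness of the individual groups $\pi_k\aut_G(S(V)^{*n})$ proved by Klaus \cite[Proposition 2.5]{Klaus} and \"Unl\"u-Yalcin \cite[Theorem 3.1]{UY12}. The colimit of a sequence of finite groups whose orders are uniformly bounded must itself be finite, and this is essentially the whole content of the argument.

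More concretely, let $A_n = \pi_k\aut_G(S(V)^{*n})$, viewed as a directed system under the stabilization maps $\varphi \mapsto \varphi * 1_{S(V)}$, and let $A_\infty$ denote its colimit. First I would choose, using Theorem~\ref{thm main them aut}, an integer $N$ and a constant $M$ so that $|A_n| \leq M$ for all $n \geq N$. Next, let $B_n \subseteq A_\infty$ be the image of the canonical map $A_n \to A_\infty$. Since every element of $A_\infty$ is represented by some element of some $A_n$, one has $A_\infty = \bigcup_{n} B_n$, and by the compatibility of the structure maps $B_n \subseteq B_{n+1}$.

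Now $|B_n| \leq |A_n| \leq M$ for $n \geq N$, so the sequence of positive integers $\{|B_n|\}_{n \geq N}$ is weakly increasing and bounded above by $M$, hence eventually constant. Once $|B_n|$ has stabilised, the inclusion $B_n \subseteq B_{n+1}$ between finite groups of equal order forces $B_n = B_{n+1}$. Therefore $A_\infty = B_{n_0}$ for some $n_0$, and $|A_\infty| \leq M < \infty$, which is the desired conclusion.

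There is essentially no obstacle here beyond invoking the theorem; the only point that requires a moment's care is that the structure maps in the system need not be injective, so one should work with the images $B_n$ in the colimit rather than with the $A_n$ themselves when passing from bounded orders to finiteness of the union.
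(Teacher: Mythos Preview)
Your argument is correct and is precisely the obvious deduction the paper has in mind; indeed the paper states the corollary without proof immediately after Theorem~\ref{thm main them aut}. One small redundancy: you do not need to invoke Klaus or \"Unl\"u--Yal\c{c}{\i}n separately, since ``essentially bounded'' (Definition~\ref{def ess bdd}) already says $|A_n|\leq M$ for all $n\gg 0$, hence in particular that these groups are finite; the colimit is unchanged by restricting to the cofinal tail $n\geq N$, so the early terms play no role.
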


%%%%%
%%%%%  SECTION 
%%%%%
%%%
%%%   POLYTOPES
%%%
\section{Polytopes and isotropy groups}

Let $G$ be a finite group.
Let $X$ be a $G$-space
For any $x \in X$ let $G_x$ denote the isotropy group of $x$.

\begin{defn}\label{def x>K}
% Let $G$ be a finite group acting on a space $X$.
For any $K \leq G$ let $X^K$ denote the subspace of $X$ fixed by $K$.
Set
% Let $X^{>K}$ denote % the following subspace of $X$
\[
X^{>K} := \{ x \in X \ \colon \ G_x > K\} = \bigcup_{K < H} X^H
\]
\end{defn}

When $K$ is the trivial subgroup, $X^{>e}$ is the subspace of $X$ consisting of the non-free orbits of $G$.

\begin{remark}\label{rem x>K 1}
Clearly $X^{>K} \subseteq X^K$ and both are invariant under the action of $N_GK$.
Hence, they both admit an action of $W=N_GK/K$.
Clearly, if we regard $X^K$ as a $W$-space, then $X^{>K} \supseteq (X^{K})^{>e}$.
%\begin{remark}\label{rem x>K 2}
% Set $Y=X^K$ and $A=X^{>K}$ as above.
If $K < H\leq N_G(K)$ and $X^{>K} \subseteq A \subseteq X^K$ is $W$-invariant then $A^H=X^H$ because $X^H \subseteq X^{>K} \subseteq A \subseteq X^K$. 
%\end{remark}
\end{remark}

Recall from \cite{Spanier66} that an abstract simplicial complex $X$ is a collection of non-empty subsets, called simplices, of an underlying set $V$ of ``vertices'', which contains all the singletons in $V$ and if $\si \in X$ is a simplex then any non empty $\tau \subseteq \si$ is also a simplex.
A \emph{polytope} is the geometric realization of a simplicial complex $X$.
By abuse of terminology we will not distinguish between a simplicial complex $X$ and the associated polytope.
A sub-polytope of $X$ is the geometric realization of a sub-complex.

Throughout, whenever a finite group $G$ acts on a polytope $X$, it is understood that it acts simplicially. %, namely the action is induced by the action of $G$ on the abstract simplicial complex $X$.
By possibly passing to the barycentric subdivision $\sd X$ we may always assume that for any $K \leq G$, $X^K$ is a sub-polytope of $X$  on which $N_GK$ acts simplicially.
In particular, also $X^{>K}$ is a sub-polytope.
See \cite[Sec. 1]{Illman78} for more details.

%%%
%%%
%%%
\begin{lem}\label{lem abdy}
Let $W$ be a finite group acting simplicially on a finite polytope $Y$.
Let $A$ be a $W$-invariant sub-polytope of $Y$ which contains $Y^{>e}$.
Then, by possibly passing to the second barycentric subdivision $\sd^2(Y)$, there is a $W$-invariant open subset $U \subseteq Y$ which contains $A$ and 
\begin{enumerate}
\item
\label{lem abdy:1}
$\overline{U}$ and $B:=Y\backslash U$ are $W$-invariant sub-polytopes of $Y$.

\item
\label{lem abdy:2}
The inclusions $A \subseteq U \subseteq \overline{U}$ and $Y\backslash\overline{U} \subseteq B \subseteq Y\backslash A$ are $W$-homotopy equivalences. 
\end{enumerate}
\end{lem}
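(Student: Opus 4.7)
The plan is to take $U$ to be an open simplicial regular neighborhood of $A$ inside $\sd^2 Y$. Since the construction is intrinsic to the simplicial structure, it is automatically $W$-equivariant, and the standard regular-neighborhood collapse arguments then provide the required deformation retractions.

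After one barycentric subdivision one may assume that $A$ is a \emph{full} subcomplex of $Y$; a second subdivision provides the room needed to realize the canonical collapses of $A$ and of its complement as simplicial maps on the finer complex. Define
\[
U := \bigcup_{v \in V(A)} \mathrm{star}(v),
\]
where $\mathrm{star}(v)$ is the open star of the vertex $v$ in $\sd^2 Y$ and $V(A)$ is the (necessarily $W$-invariant) vertex set of $A$. Then $U$ is open, $W$-invariant, and contains $A$. For part (1), $\overline{U}$ is the union of all closed simplices of $\sd^2 Y$ meeting $V(A)$, i.e.\ the closed simplicial neighborhood of $A$, which is a $W$-invariant subcomplex; and $B = Y\setminus U$ is the union of all closed simplices of $\sd^2 Y$ disjoint from $V(A)$ (equivalently, by fullness, disjoint from $A$), also a $W$-invariant subcomplex.

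For part (2), I would invoke standard regular-neighborhood theory on $\sd^2 Y$. The closed simplicial neighborhood $\overline{U}$ of $A$ admits a simplicial strong deformation retraction onto $A$, which is $W$-equivariant because it is defined purely in terms of the simplicial structure; its restriction to $U$ is again a strong deformation retraction onto $A$, showing that both inclusions in $A \hookrightarrow U \hookrightarrow \overline{U}$ are $W$-homotopy equivalences. Dually, the frontier $\partial U = \overline{U}\cap B$ is a $W$-invariant subcomplex, and the simplicial collar of $\partial U$ inside $\overline{U}\setminus A$ arising from the second barycentric subdivision provides a $W$-equivariant strong deformation retraction $Y\setminus A \to B$ which restricts to one of $Y\setminus \overline{U}$ onto $B$; this shows that $Y\setminus\overline{U}\hookrightarrow B \hookrightarrow Y\setminus A$ are $W$-homotopy equivalences.

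The step that will require the most care is the construction of the dual collapse away from $A$ and the verification that it restricts correctly to $Y\setminus\overline{U}\to B$; this is precisely what the second barycentric subdivision is designed to enable, since on $\sd^2 Y$ one can set up a simplicial collar neighborhood of $\partial U$ inside both $\overline{U}$ and $B$. Equivariance is automatic throughout because every subcomplex and every retraction is built from $A$ using only the simplicial data, and $V(A)$ is $W$-invariant. The hypothesis $Y^{>e}\subseteq A$ is not used directly in the proof of the lemma, but rather provides the context in which it will be applied: on $Y\setminus A \supseteq B$ the $W$-action is free.
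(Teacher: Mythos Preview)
Your construction of $U$ as the open star neighborhood of $A$ in $\sd^2 Y$ is the same as the paper's, and your argument is a valid outline, but the route you take to equivariance in part~(2) is genuinely different from the paper's.

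You argue that the regular-neighborhood deformation retractions are $W$-equivariant because they are canonical simplicial constructions built from the $W$-invariant data $(Y,A)$; this is correct (each simplex of $\overline{U}$ is a join of a face in $A$ with a face in the complementary full subcomplex, and the straight-line retraction onto the $A$-face commutes with any simplicial automorphism preserving $A$), and it indeed makes the hypothesis $Y^{>e}\subseteq A$ unnecessary for the lemma itself. The paper, by contrast, \emph{does} use that hypothesis: it imports the non-equivariant homotopy equivalences from Munkres and then upgrades them to $W$-homotopy equivalences by checking fixed points. For $H\neq 1$ the inclusion $Y^{>e}\subseteq A$ forces $A^H=U^H=\overline{U}^H$ and $(Y\setminus A)^H=\emptyset$, so the $H$-fixed-point inclusions are trivially equivalences; the case $H=1$ is exactly the non-equivariant statement. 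Your approach is more general and self-contained; the paper's is shorter on the page since it outsources the geometric work to a reference, at the cost of invoking an extra hypothesis that is available anyway in every application. Your remark that $Y^{>e}\subseteq A$ ``is not used directly in the proof'' is therefore true of your proof but not of the paper's.
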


\begin{proof}
By \cite[Lemma 72.2]{Mu84} and the remarks in the beginning of \cite[\S 70]{Mu84}, the open subset $U:=\St(A,\sd^2(Y))$, namely the star of $A$ in the second barycentric subdivision of $Y$, has the properties in points \ref{lem abdy:1} and \ref{lem abdy:2} non-equivariantly.
Since $A$ is $W$-invariant, it is clear that  $U$ must be $W$-invariant, and hence so are $\overline{U}$ and $B$.
Thus, point \ref{lem abdy:1} follows.

To prove point \ref{lem abdy:2} we must show that for any $H \leq W$ the inclusions $A^H \subseteq U^H \subseteq \overline{U}^H$ and $(Y\backslash\overline{U})^H \subseteq B^H \subseteq (Y\backslash A)^H$ are homotopy equivalences.
If $H \neq 1$ then $A^H=U^H=\overline{U}^H$ and $(Y\backslash A)^H=\emptyset$ by Remark \ref{rem x>K 1} (with $K=1$).
If $H=1$ then these inclusions are homotopy equivalences by the choice of $U$.
\end{proof}

%%%
%%% BREDON HOMOLOGY AND COHOMOLOGY
%%%
\section{Bredon homology and cohomology}\label{sec bredon cohomology}

In this section we will recall the definitions and some of the basic properties of Bredon homology and cohomology groups.
Most of the results are contained in Bredon's book \cite{BredonSLN34} in the cohomological setting.

Let $G$ be a finite group.
Let $\O_G$ denote the category of transitive left $G$-sets.
It is equivalent to its full subcategory whose objects are the left cosets $G/H$.
A \emph{cohomological coefficient functor} is a functor $\M \colon \O_G^\op \to \Ab$.
The cohomological coefficient functors form an abelian category with natural transformations as morphisms which will be denoted $\mdl{\O_G^\op}$.
It has enough injectives and projectives.
Similarly, a \emph{homological coefficient functor} is a functor $\M \colon \O_G \to \Ab$.
The abelian category of homological coefficient functors will be denoted $\mdl{\O_G}$; It has enough injectives and projectives.

A $G$-module $M$ gives rise to the following coefficient functors described in \cite[I.4]{BredonSLN34}.
For any $G$-set $\Omega$ let $\ZZ[\Omega]$ denote the permutation $G$-module whose underlying set is the free abelian group with $\Omega$ as a basis.
If $M$ is a left (resp. right) $G$-module, there is a cohomological (resp. homological) coefficient functor
\[
M \colon \Omega \mapsto \Hom_{\ZZ G}(\ZZ[\Omega],M), \qquad M \colon \Omega \mapsto M \otimes_{\ZZ G} \ZZ[\Omega].
\]
Associated to a cohomological coefficient functor $\M$ there is a unique equivariant cohomology theory, called \emph{Bredon cohomology}, defined on the category of $G$-CW complexes with the property that $H^*_G(\Omega;\M)=\M(\Omega)$ for any $G$-set $\Omega$ viewed as a discrete $G$-space.
See \cite[I.6]{BredonSLN34} for details.
When the coefficient functor $\M$ is associated with a $G$-module $M$ these cohomology groups have a particularly nice description as the homology groups of the cochain complex
\[
C^*_G(X;M) \, := \, \Hom_{\ZZ G}(C_*(X;\ZZ),M)
\]
where $C_*(X;\ZZ)$ is the ordinary (cellular) chain complex of $X$.
See \cite[I.9, p. I-22]{BredonSLN34}.
If $X$ is a polytope on which $G$ acts simplicially, then $H^*_G(X;\M)$ can be calculated by applying $\Hom_{\ZZ G}(-,M)$ to the simplicial chain complex of $X$. See \cite[\S 4.3]{Spanier66}.

In a similar way, one defines the Bredon homology groups $H_*^G(X;\M)$ with respect to a homological coefficient functor.
If $\M$ is associated with a right $G$-module $M$ then these groups are the homology groups of the chain complex
\[
C_*^G(X;M) \, := \, M  \otimes_{\ZZ G} C_*(X;\ZZ).
\]
If $X$ is a polytope on which $G$ acts simplicially then $C_*(X;\ZZ)$ can be replaced with the simplicial chain complex of $X$.

For any $G$-space $X$ and $n \geq 0$ there is an associated coefficient functor $\ul{H}_n(X;\ZZ)$ in $\mdl{\O_G^\op}$, see \cite[I.9]{BredonSLN34},
\[
\ul{H}_n(X;\ZZ) \colon G/H \, \mapsto \, H_n(X^H;\ZZ).
\]
In other words this functor has the effect $\Omega \mapsto H_n(\map_G(\Omega,X);\ZZ)$.
Given any cohomological coefficient functor $\M$ there is a first quadrant cohomological spectral sequence, see \cite[Section I.10, (10.4)]{BredonSLN34},
\[
E_2^{p,q}  = \Ext_{\mdl{\O_G^\op}}^p(\ul{H}_q(X;\ZZ),\M) \ \Rightarrow \ H_G^{p+q}(X;\M).
\]
Similarly, for any homological coefficient functor $\M$ there is a first quadrant homological spectral sequence
\[
E^2_{p,q}  = \Tor^{\mdl{\O_G}}_p(\ul{H}_q(X;\ZZ),\M) \ \Rightarrow \ H^G_{p+q}(X;\M).
\]
% These spectral sequences show, in particular, the homotopy invariance of the Bredon (co)homology groups.
Recall that a map $f \colon X \to Y$ of $G$ spaces is a weak $G$-homotopy equivalence if it induces weak homotopy equivalences on the fixed points $X^H \simeq Y^H$ for all $H \leq G$.
The spectral sequences above imply the homotopy invariance of Bredon (co)homology.

%%%
%%%
%%%
\section{An equivariant Lefschetz duality}

The main result of this section is Proposition \ref{prop:lefshetz duality} which is an equivariant form of Lefschetz duality for Bredon (co)homology with respect to coefficient functors associated with trivial modules.
Its hypotheses should be compared with Lemma \ref{lem abdy}.

For any set $X$ let $\ZZ[X]$ denote the free abelian group with $X$ as a basis.
The assignment $X \mapsto \ZZ[X]$ is clearly functorial.
If $X$ is a (left) $G$-set then $\ZZ[X]$ is naturally a (left) $\ZZ G$-module.

%%%
%%%
%%%
\begin{lem}\label{lem nat isoms finite g sets}
Let $M$ be a trivial $\ZZ G$-module.
Then in the category of \emph{finite} $G$-sets there are isomorphisms, natural in $\Omega$
\begin{enumerate}
\item
$\Psi \colon \Hom(\ZZ[\Omega],\ZZ) \otimes_{\ZZ G} M \xto{\cong} \Hom_{\ZZ G}(\ZZ[\Omega],M)$
\label{nat isoms g sets:1}

\item
$\Theta \colon \Hom_{\ZZ G}(\Hom(\ZZ[\Omega],\ZZ),M) \xto{\cong} M \otimes_{\ZZ G} \ZZ[\Omega]$.
\label{nat isoms g sets:2}
\end{enumerate}
\end{lem}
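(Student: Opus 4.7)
The plan is to write down natural transformations $\Psi$ and $\Theta$ in closed form, reduce the claim to a single $G$-orbit by additivity, and verify the orbit-level case by direct computation.

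I would first define the maps. For $f \otimes m \in \Hom(\ZZ[\Omega], \ZZ) \otimes_{\ZZ G} M$, set $\Psi(f \otimes m)$ to be the $\ZZ G$-equivariant map
\[
\omega \;\longmapsto\; m \cdot \sum_{\omega' \in G\omega} f(\omega'),
\]
which is independent of the representative $\omega$ in the orbit. For $F \in \Hom_{\ZZ G}(\Hom(\ZZ[\Omega], \ZZ), M)$, choose a set $R \subseteq \Omega$ of orbit representatives and put $\Theta(F) = \sum_{\omega \in R} F(\omega^{\ast}) \otimes \omega$; a short argument using the $\ZZ G$-linearity of $F$, the triviality of $M$, and the tensor relation $m \otimes g\omega = mg \otimes \omega = m \otimes \omega$ shows this is independent of the choice of $R$. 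Both $\Psi$ and $\Theta$ descend to the balanced tensors, and their naturality in $\Omega$ with respect to $G$-morphisms of finite $G$-sets follows directly from the formulas.

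Both sides of (1) and (2) are additive with respect to disjoint unions of finite $G$-sets, and $\Psi$, $\Theta$ respect this decomposition. Hence the claim reduces to the case $\Omega = G/H$. For a transitive $G$-set, the $\ZZ G$-module isomorphism $\Hom(\ZZ[G/H], \ZZ) \cong \ZZ[G/H]$ sending $[gH]^{\ast} \mapsto [gH]$ together with the triviality of $M$ identifies each of the four groups with a canonical copy of $M$:
\[
\Hom(\ZZ[G/H], \ZZ) \otimes_{\ZZ G} M \;\cong\; M_H \;=\; M, \qquad \Hom_{\ZZ G}(\ZZ[G/H], M) \;=\; M^H \;=\; M,
\]
and similarly for the two groups appearing in $\Theta$. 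A direct evaluation of the formulas shows $\Psi$ and $\Theta$ are the identity on $M$ under these identifications, so both are isomorphisms.

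The most delicate step will be choosing the formulas for $\Psi$ and $\Theta$ so that well-definedness on the $\otimes_{\ZZ G}$ quotient, $\ZZ G$-equivariance of the output, and the orbit-wise isomorphism property all hold simultaneously: a naive formula like $\Psi(f \otimes m)(\omega) = f(\omega) m$ fails to descend because $f$ need not be $G$-invariant, while summing $f$ over all of $G$ descends but introduces stabilizer factors that spoil the orbit-level isomorphism. The $G$-orbit sum is the balanced choice, and the triviality of the $G$-action on $M$ is what allows the resulting map to be simultaneously well-defined and an isomorphism, as well as providing the canonical identification $M_H = M = M^H$ used in the reduction step.
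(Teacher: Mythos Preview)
Your proof is correct and follows essentially the same approach as the paper: both write down the same explicit formulas for $\Psi$ and $\Theta$ (the paper on dual-basis elements $\chi_\omega$, you for general $f$, but your formula specializes to theirs), check well-definedness using the triviality of $M$, and verify the isomorphism orbit by orbit. Your write-up is in fact more complete, since the paper leaves the naturality and the orbit-level isomorphism ``to the reader'' while you spell out the identifications $M_H = M = M^H$.
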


\begin{proof}
The $G$-module $\ZZ[\Omega]$ has a canonical basis $\{\omega\}_{\omega \in \Omega}$.
For any abelian group $R$, any $\omega \in \Omega$ and any $r \in R$, let $\chi_\omega^R(r) \in \Hom(\ZZ[\Omega],R)$ denote the homomorphism determined by the assignment $\omega' \mapsto 0$ if $\omega' \neq \omega$ and $\omega \mapsto r$.
Since $\Omega$ is finite, the (right) $G$-module $\Hom(\ZZ[\Omega],\ZZ)$ has a canonical $G$-invariant basis $\chi_\omega:=\chi_\omega^\ZZ(1)$ where $\omega$ runs through $\Omega$.
%More generally, for any abelian group $R$, any $\omega \in \Omega$ and any $r \in R$, let $\chi_\omega^R(r) \in \Hom(\ZZ[\Omega],R)$ denote the homomorphism determined by the assignment $\omega' \mapsto 0$ and $\omega \mapsto r$.
For any $\omega \in \Omega$ let $G\omega$ denote the orbit of $\omega$ in $\Omega$.
Given $\Omega$ we will also choose a set $\{\omega_i\}$ of representatives to the orbits of $G$ on $\Omega$; the index $i$ runs through $\Omega/G$.
The homomorphisms $\Psi$ and $\Theta$ are defined by
\begin{eqnarray*}
\Psi & \colon & \chi_\omega \otimes_{\ZZ G} m \ \mapsto \ \sum_{\omega' \in G\omega} \chi_{\omega'}^M(m) \\
\Theta &\colon& \vp \ \mapsto  \ \sum_{i \in \Omega/G} \vp(\chi_{\omega_i}) \otimes_{\ZZ G} \omega_i.
\end{eqnarray*}
To see that $\Psi$ is well defined one uses the fact that $G$ acts trivially on $M$ and $\chi_\omega^R(r) \circ g^{-1} = \chi_{g\omega}^R(r)$.
For the same reasons %, and since any $\vp \in \Hom_{\ZZ G}(\Hom(\ZZ[\Omega],\ZZ),M)$ satisfies $\vp(\chi_\omega)\circ g^{-1} = \vp(\chi_\omega \circ g^{-1})=\vp(\chi_{g\omega})$, it follows that 
$\Theta$ is independent of the choice of the representatives $\omega_i$.
By inspection $\Psi$ and $\Theta$ are natural with respect to $G$-maps $f \colon \Omega \to \Gamma$.
The details are left to the reader.
\end{proof}

For any polytope $X$ we will write $C_*(X)$ for the simplicial chain complex of $X$, see \cite[\S 4.3]{Spanier66}.
In the presence of a simplicial action of the group $G$, this becomes a chain complex of permutation $G$-modules, namely $C_n(X)$ is the $\ZZ G$-module $\ZZ[X_n]$ where $X_n$ is the $G$-set of the $n$-simplices of $X$.
If $G$ acts freely on $X$ then $C_*(X)$ is a chain complex of free $\ZZ G$-modules.
The cochain complex $C^*(X)$ is by definition $\Hom(C_*(X),\ZZ)$.
If $M$ is an abelian group then by definition 
\[
C_*(X;M)=C_*(X) \otimes M \qquad \text{and} \qquad C^*(X;M)=\Hom(C_*(X),M).
\]

%%%
%%%
%%%
\begin{prop}[Lefschetz duality]\label{prop:lefshetz duality}
Suppose that $G$ is a finite group acting simplicially on a finite $n$-dimensional polytope $Y$ which is a compact connected and orientable homology $n$-manifold. 
Let $U$ be an open $G$-subspace which contains $Y^{>e}$.
Assume that $\overline{U}$ and $B:=Y\backslash U$ are sub-polytopes of $Y$ and that the inclusion $Y\backslash\overline{U} \subseteq Y\backslash U$ is a homotopy equivalence.
Also assume that $G$ acts trivially on $H_n(Y)=\ZZ$ and that $H_n(Y) \to H_n(Y,\overline{U})$ is an isomorphism.
Set $D=B \cap \overline{U}$.
Then for any abelian group $M$  and any $p \geq 0$ there are isomorphisms
\begin{eqnarray*}
& & H^p_G(B;M) \  \cong \ H^G_{n-p}(B,D;M) \overset{\text{excision}}{\cong} H^G_{n-p}(Y,\overline{U};M) \qquad
\text{and} 
\\
& & H^G_p(B;M) \cong H_G^{n-p}(B,D;M)  \overset{\text{excision}}{\cong}  H_G^{n-p}(Y,\overline{U};M).
\end{eqnarray*}
\end{prop}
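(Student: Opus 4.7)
The plan rests on the observation that since $U \supseteq Y^{>e}$, the group $G$ acts freely on both $B$ and $D$. Hence the simplicial chain complexes $C_*(B)$, $C_*(D)$ and $C_*(B,D)$ are bounded complexes of finitely generated free $\ZZ G$-modules; and since $\ZZ G$ is self-dual over $\ZZ$ (that is, $\Hom_{\ZZ}(\ZZ G, \ZZ) \cong \ZZ G$ as $\ZZ G$-modules), the cochain complexes $C^*(B)$ and $C^*(B,D)$ are likewise bounded complexes of free $\ZZ G$-modules. Moreover, since every $n$-simplex of $Y$ lies either in $\overline{U}$ or in $B$, excision is already an equality of $\ZZ G$-chain complexes: $C_*(Y,\overline{U}) = C_*(B,D)$. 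The first step is then Lemma \ref{lem nat isoms finite g sets}, applied simplex-by-simplex, which gives chain-level identifications
\[
C^*_G(B;M) \ \cong \ C^*(B) \otimes_{\ZZ G} M, \qquad C^G_*(B;M) \ \cong \ \Hom_{\ZZ G}(C^*(B), M),
\]
together with the analogous ones for the pair $(B, D)$.

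The next step is to invoke classical (non-equivariant) Lefschetz duality equivariantly. Because $G$ acts trivially on $H_n(Y) = \ZZ$, a coherent orientation of $Y$ is $G$-invariant, so the fundamental cycle $z = \sum_{\sigma \in Y_n} \varepsilon_\sigma\, \sigma$ lies in $C_n(Y)^G$; the hypothesis $H_n(Y) \xrightarrow{\cong} H_n(Y,\overline{U})$ then guarantees that its image in $C_n(Y,\overline{U}) = C_n(B,D)$ still represents the relative fundamental class. The relative cap product with $z$ therefore defines a $G$-equivariant chain map
\[
\alpha \colon C^*(B) \longrightarrow C_{n-*}(B,D),
\]
which, by classical Lefschetz duality applied after forgetting the $G$-action, is a quasi-isomorphism of $\ZZ$-complexes.

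Since both $C^*(B)$ and $C_{n-*}(B,D)$ are bounded complexes of free $\ZZ G$-modules, the $G$-equivariant quasi-isomorphism $\alpha$ is automatically a chain-homotopy equivalence of $\ZZ G$-complexes, and is therefore preserved by both $-\otimes_{\ZZ G} M$ and $\Hom_{\ZZ G}(-,M)$. Combined with the identifications of the first step this yields $H^p_G(B;M) \cong H^G_{n-p}(B,D;M)$ and $H^G_p(B;M) \cong H^{n-p}_G(B,D;M)$; the remaining isomorphisms with the groups of $(Y,\overline{U})$ follow from the chain-level equality noted at the outset. The principal difficulty is the middle step---producing a $G$-equivariant chain-level realization of Lefschetz duality. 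Equivariance of the cap product reduces to the existence of a $G$-invariant fundamental cycle, which is exactly what the triviality of the $G$-action on $H_n(Y)$ provides; the quasi-isomorphism property itself is then just the classical non-equivariant Lefschetz duality theorem.
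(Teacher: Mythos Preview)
Your proposal is correct and follows essentially the same strategy as the paper: construct a $G$-equivariant chain-level cap product with a $G$-invariant fundamental cycle, check it is a quasi-isomorphism non-equivariantly, then use freeness of the $\ZZ G$-complexes together with Lemma~\ref{lem nat isoms finite g sets} to pass to Bredon (co)homology with coefficients $M$. The only differences are cosmetic: you run the cap product as $C^*(B)\to C_{n-*}(B,D)$ whereas the paper uses the dual direction $C^*(B,D)\to C_{n-*}(B)$, and you invoke the fact that a quasi-isomorphism between bounded complexes of projectives is a chain homotopy equivalence where the paper instead appeals to the K\"unneth spectral sequence---both yield the same conclusion. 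One point to make explicit: your appeal to ``classical Lefschetz duality'' for the pair $(B,D)$ is exactly where the hypothesis that $Y\setminus\overline{U}\hookrightarrow Y\setminus U$ is a homotopy equivalence is needed (the paper cites \cite[Theorem 70.6]{Mu84} here), so you should say so rather than leave that hypothesis unused.
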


\begin{proof}
%We may assume that $\bar{U} \neq Y$, otherwise the result is trivial.
Throughout we let $C_*(Y), C_*(B), C_*(B,D)$ etc. denote the simplicial chain complexes of these finite polytopes.
For every $g \in G$ we obtain an automorphism $g_\#$ of $C_*(B)$ which in every degree $p$ permutes the basis elements of $C_p(B)$, namely permutes the set of $p$-simplices of $B$. % via the action of $G$.
Similarly there is an automorphism $g^\#$ of $C^*(B,D)$  which has the following effect on a $p$-cochain $c^p$
\[
g^\#(c^p) \colon \si \mapsto c^p(g_\#(\si)), \qquad \text{$\si$ is a $p$-simplex of $B$.}
\]
In this way $C_*(B)$ and $C^*(B,D)$ become (co)chain complexes of left $\ZZ G$-modules where
\[
g\cdot c_p = g_\#(c_p), \qquad \text{and} \qquad 
g \cdot c^p = (g^{-1})^\#(c^p).
\]
Since $U \supseteq Y^{>e}$, the action of $G$ on $B$ and $D$ is free and therefore $C_*(B)$ and $C^*(B,D)$ are (co)chain complexes of finitely generated free $\ZZ G$-modules.

Let $\Ga$ be an orientation cycle for $Y$, namely $\Ga$ is an $n$-cycle of $Y$ whose image $[\Ga] \in H_n(Y)=\ZZ$ is a generator.
Let $[\Ga_U] \in H_n(B,D)$ be the image of $[\Ga]$ under the isomorphism
\[
H_n(Y) \xto{\ \cong \ } H_n(Y,\overline{U}) \xrightarrow[\text{excision}]{\cong} H_n(B,D)
\]
For dimensional reasons, $Z_n(Y)=H_n(Y)$.
Since $G$ acts trivially on $H_n(Y)$, it follows that $\Ga$ is $G$-invariant.
Hence, $[\Ga_U]$ is $G$-invariant and therefore its preimage $\Ga_U \in Z_n(B,D)$ is also a $G$-invariant orientation cycle since $Z_n(B,D) = H_n(B,D)$.
By \cite[Theorem 66.1]{Mu84} the assignments 
%\begin{equation}%\label{lefshetz eq 1}
\[
\Phi^p  \colon C^p(B,D) \xto{ \ \ c^p \, \mapsto \, (-1)^{n-p}\cdot ( c^p \cap \Ga_U) \ \ } C_{n-p}(B)
\]
%\end{equation}
form a morphism of cochain complexes (we view $C_{n-*}(B)$ as a cochain complex).
It follows from the $G$-invariance of $\Ga_U$ and from the naturality statement in \cite[Theorem 66.1]{Mu84} that $\Phi$ is a morphism of cochain complexes of $\ZZ G$-modules because for any $p$, set $\epsilon=(-1)^{n-p}$ and then
\[
%(-1)^{n-p} 
g \cdot \Phi(g^{-1}\cdot c^p) = 
%(-1)^{n-p} 
\epsilon \cdot
g_\#( g^\#(c^p) \cap \Ga_U) =
%(-1)^{n-p}
\epsilon \cdot
c^p \cap g_\#(\Ga_U) = 
%(-1)^{n-p}
\epsilon \cdot
c^p \cap \Ga_U = 
%(-1)^{n-p} 
\Phi(c^p).
\]
We now apply \cite[Theorem 70.6]{Mu84} to the inclusion $D \subseteq B$ and the orientation class $\Ga_U$, and use excision together with the fact that $j_* \colon H_*(Y\backslash\overline{U}) \to H_*(Y\backslash U)$ is an isomorphism by hypothesis on $Y\backslash\overline{U} \subseteq Y\backslash U$, to deduce that $\Phi$ induces an isomorphism in homology.

Set $R_*(B,D):=C^{n-*}(B,D)$.
Thus, $R_*(B,D)$ is a chain complex which is obtained from the cochain complex $C^*(B,D)$ by simply re-indexing the modules.
So 
\[
\Phi \colon R_*(B,D) \to C_*(B)
\]
is a morphism of chain complexes of finitely generated free $\ZZ G$-modules.
It follows from K\"unneth's spectral sequence \cite[Theorem 5.6.4]{Weibel94} that the maps below induce isomorphism in (co)homology
\begin{eqnarray*}
& & 
M \otimes_{\ZZ G} R_*(B,D)  \xto{ \ M \otimes_{\ZZ G} \Phi   \ } M \otimes_{\ZZ G} C_*(B)  \\
\label{lefshetz:eq2}
& & 
\Hom_{\ZZ G}(C_*(B),M) \xto{ \ \Hom_{\ZZ G}(\Phi,M)  \ } \Hom_{\ZZ G}(R_*(B,D),M).
\end{eqnarray*}
By applying Lemma \ref{lem nat isoms finite g sets} to $C_*(B,D)$ and $C^*(B,D)$ and using the notation and results in Section \ref{sec bredon cohomology}, we obtain isomorphisms of chain complexes
\begin{eqnarray*}
& & M \otimes_{\ZZ G} R_*(B,D)  = M \otimes_{\ZZ G} C^{n-*}(B,D)  \cong C_G^{n-*}(B,D;M) \\
& & \Hom_{\ZZ G}(R_*(B,D),M) = \Hom_{\ZZ G}(C^{n-*}(B,D),M) \cong C_{n-*}^G(B,D;M)
\end{eqnarray*}
Therefore $H^{n-p}_G(B,D;M) \cong H^G_p(B;M)$ and $H_G^p(B;M) \cong H^G_{n-p}(B,D;M)$.
\end{proof}

%%%%%%
%%%%%%
%%%%%%
%%%
%%%
%%%
\section{The key lemma}\label{sec key lemma}

Throughout this section we will fix a finite group $G$ and a sequence $\{ X_n\}_{n \geq1}$ of compact polytopes on which $G$ acts simplicially.
By possibly passing to the barycentric subdivisions, we may assume that for any $H \leq G$, the subspaces $(X_n)^H$ are sub-polytope of $X_n$.
We will make the following assumptions on $\{ X_n \}_n$.
Important examples are given by $X_n=S(V)^{*n}$ where $S(V)$ is a linear sphere; See Propositions \ref{prop lin spheres good} and \ref{prop abc 123}.

\renewcommand{\theenumi}{(\Roman{enumi})}
\renewcommand{\labelenumi}{(\Roman{enumi})}
\begin{enumerate}
\item
\label{key I}
For any $H \leq G$ either $X_n^H$ are empty for all $n \gg 0$, or for any $n \gg 0$ these are connected compact and orientable homology $N$-manifolds for some $N$ (which depends on $n$) such that $N_G(H)$ acts trivially on $H_N(X_n^H;\ZZ)\cong \ZZ$.
% and any $n$, either $X_n^H$ is empty, or it is a connected compact orientable homology $N$-manifold for some $N$ (which depends on $n$), and $N_GH$ acts trivially on $H_N(X_n^H;\ZZ)=\ZZ$.

\item
\label{key II}
For any $H \leq G$,
% \begin{itemize}
%\item[(i)] $X_n^H$ are either empty for all $n \gg 0$ or connected for all $n \gg 0$,  %$\lim_{n \to \infty} \rk H_0(X_n^H;\ZZ)=0$ or $\lim_{n \to \infty} \rk H_0(X_n^H;\ZZ)=1$.
%\item[(ii)] 
if $i \geq 1$ then $H_i(X_n^H;\ZZ)=0$ for all $n \gg 0$.   %$\lim_{n \to \infty} \rk H_i(X_n^H;\ZZ)=0$ for any $i > 0$.
%\end{itemize}

\item
\label{key III}
If $H' \leq H$ then either 
\begin{itemize}
\item[(i)] $\lim_{n \to \infty} (\dim X_n^{H'} - \dim X_n^H)= \infty$, or
\item[(ii)] $X_n^H=X_n^{H'}$ for all $n \gg 0$.
\end{itemize}
\end{enumerate}
\renewcommand{\theenumi}{(\arabic{enumi})}
\renewcommand{\labelenumi}{(\arabic{enumi})}

Let us now fix a subgroup $K \leq G$ and set $W=N_GK/K$.
For every $n\geq 1$ set $Y_n=X_n^K$ and $A_n=X_n^{>K}$ (Definition \ref{def x>K}).
These are polytopes on which $W$ acts simplicially.
By Remark \ref{rem x>K 1}, $A_n \supseteq (Y_n)^{>e}$.
%With respect to the action of $W$, set $A_n=Y_n^{>e}$, see Definition \ref{def x>K}, and observe that $A_n \subseteq X_n^{>K}$ by Remark \ref{rem x>K 1}.

By Proposition \ref{lem abdy}, for every $n \geq 1$ we can choose a $W$-invariant neighbourhood $U_n \subseteq Y_n$ of $A_n$ such that $\overline{U_n}$ and $B_n:=Y_n \backslash U_n$ are sub-polytopes of $Y_n$, and the inclusions $A_n \subseteq U_n \subseteq \overline{U_n}$ and $Y_n \backslash \overline{U_n} \subseteq B_n \subseteq Y_n \backslash A_n$ are $W$-homotopy equivalences.
Set $D_n=B_n \cap \overline{U_n}$.

%%%
%%%
%%%
\begin{lem}\label{lemma key}
Let $G$ be a finite group.
Let $\{X_n\}_n$ be a sequence of compact $G$-polytopes satisfying \ref{key I}--\ref{key III} above.
Fix $K \leq G$, set $W=N_GK/K$, and let $A_n, U_n, B_n, D_n \subseteq X_n$ be the subspaces defined above.
Assume further that $A_n \subsetneq Y_n$ for all $n \gg 0$.
Write $N$ for the dimension of $Y_n$.
Then
\renewcommand{\theenumi}{(\Alph{enumi})}
\renewcommand{\labelenumi}{(\Alph{enumi})}
\begin{enumerate}
\item
\label{key A}
For any abelian group $T$ and any $k \geq 0$ there are isomorphisms $H^W_k(B_n;T) \cong H_k(W;T)$ for all $n \gg 0$.
The right hand side is group homology with the trivial $W$\!--module $T$.

\item
\label{key B}
For any finite abelian group $R$ and any $k \geq 0$, the sequences of groups $\{ H_k^W(Y_n;R)\}_n$ and $\{ H_k^W(A_n;R)\}_n$ are essentially bounded.

\item
\label{key C}
If $k \geq 1$ then $\{ H_W^{N-k}(Y_n;\ZZ)\}_n$ and  $\{ H_W^{N-k}(Y_n,A_n;\ZZ)\}_n$ are essentially bounded.
If $R$ is a finite abelian group then $\{ H_W^{N-k}(Y_n;R)\}_n$ and  $\{ H_W^{N-k}(B_n;R)\}_n$ and $\{ H_W^{N-k}(D_n;R)\}_n$ are essentially bounded for any $k \geq 0$.

\item
\label{key D}
Consider the homomorphisms $H_W^{i}(D_n;\ZZ) \xto{\la_i} H_W^i(B_n;\ZZ)$ induced by $D_n \subseteq B_n$.
%If $R=\ZZ$ 
Then $\{\ker \la_{N-k}\}_n$ is essentially bounded for any $k \geq 1$ and $\{\coker \la_{N-k}\}_n$ is essentially bounded for any $k \geq 2$.
% If $R$ is a finite abelian group then $\{\ker \la_{N-k}\}_n$ and $\{\coker \la_{N-k}\}_n$ are essentially bounded for any $k \geq 0$.
\end{enumerate}
\renewcommand{\theenumi}{(\arabic{enumi})}
\renewcommand{\labelenumi}{(\arabic{enumi})}
\end{lem}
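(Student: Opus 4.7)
The plan is to treat the four parts in order, with (A) as the technical core and (B)--(D) as consequences drawn via Proposition~\ref{prop:lefshetz duality} and the long exact sequences of the pairs $(Y_n,A_n)$ and $(B_n,D_n)$. The main obstacle will be (A): one must marry hypothesis~\ref{key I} (manifold orientability, needed to run Lefschetz duality), the homological vanishing~\ref{key II}, and the dimensional gap~\ref{key III} to arrange that $B_n$ is arbitrarily highly $\ZZ$-connected for large $n$.

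For (A), the relations $B_n\cap A_n=\emptyset$ and $A_n\supseteq Y_n^{>e}$ (Remark~\ref{rem x>K 1}) guarantee that $W$ acts freely on the finite $W$-CW complex $B_n$, which identifies $H^W_*(B_n;T)$ with $H_*(B_n/W;T)$. I would then compute the latter through the Cartan--Leray spectral sequence $E^2_{p,q}=H_p(W;H_q(B_n;T))\Rightarrow H_{p+q}(B_n/W;T)$ of the regular $W$-covering $B_n\to B_n/W$, which collapses in total degree $k$ provided $H_q(B_n;\ZZ)=0$ for $1\le q\le k$ and $n\gg 0$. To establish this vanishing, use the non-equivariant Lefschetz duality isomorphism $H_q(B_n;\ZZ)\cong H^{N-q}(Y_n,A_n;\ZZ)$ (the special case of Proposition~\ref{prop:lefshetz duality} for the trivial group) and plug it into the long exact sequence of the pair $(Y_n,A_n)$: hypothesis~\ref{key III} combined with $A_n\subsetneq Y_n$ forces $N-\dim A_n\to\infty$, so $H^{N-q-1}(A_n;\ZZ)=H^{N-q}(A_n;\ZZ)=0$ eventually, while the universal coefficient theorem and~\ref{key II} kill $H^{N-q}(Y_n;\ZZ)$ throughout the range $1\le N-q\le N-1$.

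For (B), the Bredon homology spectral sequence $E^2_{p,q}=\Tor^{\mdl{\O_W}}_p(\underline{H}_q(-;\ZZ),R)\Rightarrow H^W_{p+q}(-;R)$ from Section~\ref{sec bredon cohomology} is the tool. For $Y_n$, hypothesis~\ref{key II} annihilates $\underline{H}_q(Y_n;\ZZ)$ in the range $1\le q\le k$ for $n\gg 0$, so the spectral sequence collapses to $\Tor^{\mdl{\O_W}}_k(\underline{H}_0(Y_n;\ZZ),R)$; by~\ref{key I} the functor $\underline{H}_0(Y_n;\ZZ)$ is eventually a fixed indicator functor recording which $X_n^H$ are non-empty, and Tor with the finite module $R$ over the finite orbit category $\O_W$ is therefore an eventually constant finite group. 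For $A_n$, the identity $(A_n)^{H'}=X_n^H$ for $H'>1$ (Remark~\ref{rem x>K 1}) handles the non-free orbits of $\underline{H}_q(A_n;\ZZ)$ exactly as in the $Y_n$ case; for the free orbit I would compute $H_q(A_n;\ZZ)$ via the Mayer--Vietoris spectral sequence of the cover $A_n=\bigcup_{H>K}X_n^H$, whose $E^1$-page is assembled from the groups $H_*(X_n^{H'})$ for subgroups $H'$ generated by finite sub-collections. Hypothesis~\ref{key II} kills all its $q\ge 1$ rows, the remaining combinatorial datum stabilizes, and $H_q(A_n;\ZZ)$ is eventually a fixed finitely generated abelian group whose Tor with $R$ is finite.

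For (C) and (D), everything reduces to (A) via Proposition~\ref{prop:lefshetz duality}. The dimension bound $N-\dim A_n\to\infty$ gives $H^{N-k-1}_W(A_n;M)=H^{N-k}_W(A_n;M)=0$ for fixed $k\ge 0$ and $n\gg 0$, so the long exact sequence of $(Y_n,A_n)$ yields $H^{N-k}_W(Y_n;M)\cong H^{N-k}_W(Y_n,A_n;M)$, and Proposition~\ref{prop:lefshetz duality} together with the $W$-homotopy equivalence $A_n\simeq\overline{U_n}$ identifies this with $H^W_k(B_n;M)\cong H_k(W;M)$, which is finite in the ranges asserted by (C). For $H^{N-k}_W(B_n;R)$, Proposition~\ref{prop:lefshetz duality} rewrites this as $H^W_k(Y_n,A_n;R)$, which the homological long exact sequence of $(Y_n,A_n)$ combined with part (B) bound. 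For $H^{N-k}_W(D_n;R)$, sandwich it in the cohomological long exact sequence of $(B_n,D_n)$ between the bounded groups $H^{N-k}_W(B_n;R)$ and $H^{N-k+1}_W(B_n,D_n;R)\cong H_{k-1}(W;R)$. Finally, the same long exact sequence with $\ZZ$ coefficients proves (D): $\ker\lambda_{N-k}$ is a quotient of $H^{N-k}_W(B_n,D_n;\ZZ)\cong H_k(W;\ZZ)$ and $\coker\lambda_{N-k}$ embeds into $H^{N-k+1}_W(B_n,D_n;\ZZ)\cong H_{k-1}(W;\ZZ)$; since $H_j(W;\ZZ)$ is finite for $j\ge 1$, these give the bounds in the asserted ranges $k\ge 1$ and $k\ge 2$.
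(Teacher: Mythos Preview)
Your proposal is correct and follows essentially the same route as the paper's own proof. The only cosmetic differences are your use of the Cartan--Leray spectral sequence in place of the paper's K\"unneth spectral sequence in (A), and of the long exact sequence of the pair $(B_n,D_n)$ in place of the Mayer--Vietoris sequence for $Y_n=B_n\cup_{D_n}\overline{U_n}$ in (C)--(D); in both cases the two devices yield the same information once the vanishing of $H^*_W(\overline{U_n};M)$ in the relevant degrees is used.
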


\begin{proof}
For all $n \gg 0$, $Y_n\neq \emptyset$ since $A_n \subsetneq Y_n$.
% Since $A_n \neq Y_n$, we get $Y_n \neq \emptyset$ for all $n \gg 0$.
Since $\dim(A_n)$ is the maximum of $\dim(X_n^H)$ where $H>K$, hypothesis \ref{key III} implies
\begin{equation}\label{eq lim codim}
\lim_{n \to \infty} (\dim Y_n - \dim A_n) = \infty.
\end{equation}

\noindent
Proof of \ref{key A}:
Recall that $N$ denotes $\dim Y_n$.
%Since $Y_n=X_n^K$, by hypothesis \ref{key I} we may apply Lefschetz duality \cite[Theorem 70.2]{Mu84} to $A_n \subseteq Y_n$.
Given $0 \leq i \leq k$, we obtain the isomorphisms below for all $n \gg 0$.
The first isomorphism follows from the equivalence $Y_n\backslash U_n \simeq Y_n\backslash A_n$ and from Lefschetz duality \cite[Theorem 70.2]{Mu84} for $A_n \subseteq Y_n$ which is applicable by hypothesis \ref{key I} since $Y_n=X_n^K$.
The second isomorphism follows from \eqref{eq lim codim}, the third from Poincar\'e duality, and the fourth from hypotheses \ref{key II} and \ref{key I} since $Y_n \neq \emptyset$.
\begin{multline*}
H_i(B_n;\ZZ) \cong
H^{N-i}(Y_n,A_n;\ZZ) \cong 
H^{N-i}(Y_n;\ZZ) \\ \cong
H_i(Y_n;\ZZ) \cong 
\left\{
\begin{array}{ll}
\ZZ & \text{if } i=0 \\
0 & \text{if } 1 \leq i \leq k
\end{array}\right.
\end{multline*}
By construction, $W$ acts freely on $B_n$, so $C_*(B_n)$ is a chain complex of finitely generated free $W$-modules. % and $H_i(B_n)=0$ for all $1 \leq i \leq k$.
We obtain a K\"unneth's spectral sequence \cite[Theorem 5.6.4]{Weibel94} for $X_*^W(B_n;T)=C_*(B_n) \otimes_{\ZZ W} T$
\[
E^2_{p,q}(n) = \Tor_p^{\ZZ W}(H_q(B_n);T) \Rightarrow H^W_{p+q}(B_n;T)
\]
We have seen above that if $n \gg 0$ then $H_q(B_n)=0$ for any $1 \leq q \leq k$ and therefore we obtain isomorphisms, for all $0 \leq i \leq k$
\[
H^W_i(B_n;T) \cong \Tor_i^{\ZZ W}(H_0(B_n),T) = \Tor_i^{\ZZ W}(\ZZ,T)=H_i(W;T).
\]

\noindent
Proof of \ref{key B}:
Consider the coefficient functors $\ul{H}_j(Y_n)$ defined in Section \ref{sec bredon cohomology}.
Given $i>0$, hypothesis \ref{key II} implies that $\ul{H}_i(Y_n)=0$ for all $n \gg 0$ and, that the sequence of functors $\{\ul{H}_0(Y_n)\}_n$ stabilizes on a coefficient functor $F \colon \O_W^\op \to \Ab$ whose values are the groups $0$ or $\ZZ$ (depending on whether $X_n^H$, where $K \leq H \leq N_G(H)$, are connected or empty for all $n \gg 0$).
As a consequence, for all $n \gg 0$, the spectral sequence
\[
E_{i,j}^2(n)=\Tor^{\mdl{\O_W}}_i (\ul{H}_j(Y_n),R) \ \Rightarrow \  H_{i+j}^W(Y_n;R)
\]
vanishes for $1 \leq j \leq k$ and $E_{i,0}^2(n)=\Tor^{\mdl{\O_W}}_i (F,R)$.
Hence $H_k^W(Y_n;R) \cong \Tor^{\mdl{\O_W}}_k (F,R)$ whose order is bounded by $|R|^\al$ for some $\al$ which depends only  on $\O_W$ by Lemma \ref{lem bdd cobar}.
Thus, $\{ H_k^W(Y_n;R) \}_n$ is essentially bounded. 

We now prove the second assertion of point \ref{key B}.
The first step is to show that there is some number $\be \geq 1$ such that for any $0 \leq i \leq k$ we have $\rk H_i(A_n)\leq \be$  for all $n \gg 0$.
Let $\I$ denote the poset of all the subgroup $H \leq G$ such that $H > K$.
For every $n$ there is a functor $F_n \colon \I^\op \to \spaces$ given by $F_n(H)=X_n^H$.
By hypothesis \ref{key II}, for all sufficiently large $n$, if $0 \leq j \leq k$ then $H_j(F_n)=0$, and the sequence of functors $H_0(F_n)$ stabilize of a functor $F' \colon \I^\op \to \Ab$ whose values are the groups $0$ or $\ZZ$.

Note that $F_n(H) \cap F_n(H')=F_n(\langle H,H'\rangle)$ and since $A_n=\cup_{H \in \I}F_n(H)$, it follows that $A_n=\ccolim{\I^\op} F_n$.
In fact since $X_n^H$ are polytopes, this also shows that the functors $F_n$ are Reedy cofibrant in the sense of \cite[Section 22]{DHKS}.
Therefore the natural maps
\[
\hhocolim{\I^\op} F_n \to \ccolim{\I^\op} F_n = A_n
\]
are homotopy equivalences.
We obtain a Bousfield-Kan spectral sequence
\[
E_{i,j}^2(n)=\colim^{\mdl{\I^\op}}_i H_j(F_n;\ZZ) \ \Rightarrow \ H_{i+j}(A_n;\ZZ).
\]
We have seen that if $n \gg 0$ then $E^2_{i,j}(n)=0$ for all $1 \leq j \leq k$ and therefore $H_i(A_n) \cong \colim^{\mdl{\I^\op}}_i F' = \Tor^{\mdl{\I^\op}}_i(\ZZ,F')$ for all $0 \leq i \leq k$. 
It follows from Lemma \ref{lem bdd cobar}(b) that there is $\be  \geq 1$ such that $\rk H_i(A_n) \leq \be$ for any $0 \leq i \leq k$ provided $n \gg 0$, 

For every $j \geq 0$ consider the coefficient functors $\ul{H}_j(A_n) \colon H \mapsto H_j(A_n^H)$ defined in section \ref{sec bredon cohomology} (here $H \leq W$).
If $H \neq 1$ then by Remark \ref{rem x>K 1}, $A_n^H=X_n^{\tilde{H}}$ where $\tilde{H}$ is the preimage of $H \leq W$ in $G$.
By hypothesis \ref{key II}, $H_j(A_n^H)=0$ for all $1 \leq j \leq k$ provided $n \gg 0$.
Also, the sequence $\{ H_0(A_n^H)\}_n$ stabilizes on a either $\ZZ$ or $0$.
If $H=1$ then we have seen that $\rk H_j(A_n^H) \leq \be$ for all $0 \leq j \leq k$ provided $n \gg 0$.
Thus, for any $n \gg 0$, if $0 \leq j \leq k$ then $\rk (\ul{H}_j(A_n)(-)) \leq \be$.
Applying Lemma \ref{lem bdd cobar}(a) to the spectral sequence (see section \ref{sec bredon cohomology})
\[
E^2_{i,j}(A_n) = \Tor^{\mdl{\O_W}}_i(\ul{H}_j(A_n);R) \ \Rightarrow \ H^W_{i+j}(A_n;R)
\]
we deduce that there is $\al >0$ such that for any $0 \leq i,j \leq k$ we have $|E^2_{i,j}(A_n)| \leq |R|^{\al \be}$ provided $n \gg 0$.
As a result $|H^W_k(A_n;R)| \leq |R|^{(k+1)\al\be}$ for all $n \gg0$, namely $\{H_k^W(A_n;R)\}_n$ is essentially bounded.

\noindent
Proof of \ref{key C} and \ref{key D}:
%Since $A_n$ is the union of  $X_n^H$ for $H>K$, hypothesis \ref{key III} implies that $\lim_{n \to \infty} \dim Y_n - \dim A_n=\infty$.
Let $M$ be an abelian group.
Since $Y_n \neq \emptyset$, it is, by hypothesis \ref{key I} a compact connected orientable homology $N$-manifold.
For any $0 \leq i \leq k$, equation \eqref{eq lim codim}, the choice of $U_n \supseteq A_n$, Proposition \ref{prop:lefshetz duality}, excision and point \ref{key A}, yield the following isomorphisms for all $n \gg 0$
\begin{multline*}
H_W^{N-i}(Y_n;M) \cong 
H_W^{N-i}(Y_n,A_n;M) \cong 
H_W^{N-i}(Y_n,\overline{U_n};M) \cong \\
H^{N-i}_W(B_n,D_n;M) \cong H_i^W(B_n;M) \cong H_i(W;M).
\end{multline*}
If $k \geq 1$ and $M=\ZZ$ then $|H_k(W;\ZZ)| \leq |W|^{|W|^k}$ by Remark \ref{rem bdd tor}, and therefore $\{H_W^{N-k}(Y_n;\ZZ)\}_n$ is essentially bounded.
This is the first part of point \ref{key C}.
Similarly, if $k \geq 0$ and $M$ is a finite group then $|H_k(W;M)| \leq |M|^{|W|^k}$, and therefore $\{H_W^{N-k}(Y_n;M)\}_n$ is essentially bounded.

%Let $M$ be any abelian group.
The inclusions $A_n \subseteq \overline{U_n}$ are $W$-equivalences and, from \eqref{eq lim codim} it follows that $H_W^{N-k}(\overline{U_n};M)=0$ for all $n \gg 0$.
For every $n \geq 1$ we have $Y_n=B_n \cup_{D_n} \overline{U_n}$, so provided $n \gg 0$, the Mayer--Vietoris sequence yield the exact sequence
\begin{multline*}
\dots \to H_W^{N-k}(Y_n;M) \to H_W^{N-k}(B_n;M) \xto{\ \la_{N-k} \ } H_W^{N-k}(D_n;M) \to \\
\to H_W^{N-k+1}(Y_n;M) \to \dots
\end{multline*}
If $M=\ZZ$ then point \ref{key D} now follows from the first part of point \ref{key C} which we have proven above.
It remains to prove the second part of point \ref{key C}.
We have shown above that if $M$ is finite then $\{H_W^{N-i}(Y_n;M)$ is essentially bounded for any $i \geq 0$. %, so from the Mayer--Vietoris sequence above we deduce that $\{ \ker \la_{N-k} \}_n$ and $\{ \coker \la_{N-k} \}_n$ are essentially bounded.
Proposition \ref{prop:lefshetz duality} gives the isomorphisms
\[
H_W^{N-k}(B_n;M) \cong H^W_k(B_n,D_n;M) \cong H_k^W(Y_n,\overline{U_n};M) \cong H^W_k(Y_n,A_n;M)
\]
and $\{ H^W_k(Y_n,A_n;M) \}_n$ is essentially bounded by point \ref{key B} and the long exact sequence in homology.
Hence $\{ H_W^{N-k}(B_n;M) \}_n$ is essentially bounded.
The Mayer--Vietoris sequence above shows that also $\{ H_W^{N-k}(D_n;M) \}_n$ is essentially bounded.
\end{proof}

%%%
%%%
%%%
\begin{lem}\label{lem bdd cobar}
Let $\C$ be a finite category and $k \geq 0$ an integer.
Let $\al$ be the number of sequences of $k$ composable morphisms in $\C$.
Let $F \colon \C^\op \to \Ab$ be a functor such that $\rk (F(C)) \leq r$ for any $C \in \C$.
Let $G \colon \C \to \Ab$ be a functor.
\begin{itemize}
\item[(a)]
If there is some $M>0$ such that $|G(C)| \leq M$ for all $C \in \C$ then $|\Tor^{\mdl{\C}}(F,G)| \leq M^{r\al}$.

\item[(b)]
If $\rk (G(C)) \leq m$ for all $C \in \C$, then $ \rk \Tor^{\mdl{\C}}(F,G) \leq \al m r$.
\end{itemize}
\end{lem}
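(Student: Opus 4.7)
The plan is to compute $\Tor^{\mdl{\C}}(F,G)$ using the standard bar complex over the category $\C$. This gives a chain complex $B_\bullet$ whose $k$-th term is
\[
B_k \;=\; \bigoplus_{C_0 \to C_1 \to \cdots \to C_k} F(C_k) \otimes_{\ZZ} G(C_0),
\]
indexed by the $\al$ sequences of $k$ composable morphisms in $\C$, and whose homology in degree $k$ is $\Tor^{\mdl{\C}}_k(F,G)$. In particular, $\Tor^{\mdl{\C}}_k(F,G)$ is a subquotient of $B_k$, and any uniform bound on the size or rank of each summand translates, after multiplication by $\al$, into a bound on $\Tor_k$.

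The heart of the argument is then the estimation of a single summand $F(C) \otimes G(C')$. Since $\rk F(C) \leq r$, there is a surjection $\ZZ^r \twoheadrightarrow F(C)$; tensoring with $G(C')$ yields a surjection $G(C')^r \twoheadrightarrow F(C) \otimes G(C')$. Under the hypothesis of (a), $|G(C')| \leq M$ forces $|F(C) \otimes G(C')| \leq M^r$, hence $|B_k| \leq M^{r\al}$ and therefore $|\Tor^{\mdl{\C}}_k(F,G)| \leq M^{r\al}$. Under the hypothesis of (b), $\rk G(C') \leq m$ makes $G(C')^r$ generated by at most $rm$ elements, so $\rk(F(C) \otimes G(C')) \leq rm$, $\rk B_k \leq \al rm$, and (using that $\rk$ is monotone under subquotients of finitely generated abelian groups, by lifting a subgroup to a free cover) $\rk \Tor^{\mdl{\C}}_k(F,G) \leq \al rm$.

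I do not foresee any serious obstacle: the construction is a standard bar-resolution computation for functor categories over a small category, and the rest is elementary monotonicity of order and of minimal number of generators under tensor products and subquotients. The only slightly finicky point is the indexing convention of the bar complex, namely whether a $k$-simplex $C_0 \to \cdots \to C_k$ contributes $F(C_k) \otimes G(C_0)$ or $F(C_0) \otimes G(C_k)$; this depends on whether one resolves $F$ or $G$, but the estimate is symmetric enough in the two variables that the conclusion is unaffected by the choice.
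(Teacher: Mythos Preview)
Your proposal is correct and follows essentially the same approach as the paper: compute $\Tor^{\mdl{\C}}_k(F,G)$ as the $k$th homology of the bar complex $\bigoplus_{C_0\to\cdots\to C_k} G(C_0)\otimes F(C_k)$, bound each summand by $M^r$ (respectively, by rank $rm$) using the surjection $\ZZ^r\twoheadrightarrow F(C_k)$, and multiply by the number $\al$ of summands. The paper's proof is more terse but identical in substance, and your remark on the indexing convention is well taken and harmless.
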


\begin{proof}
The groups $\Tor^{\mdl{\C}}(F,G)$ are the homology groups of a chain complex (the cobar construction) whose $n$th group has the form
\[
\bigoplus_{C_0 \to \dots \to C_n} G(C_0) \otimes F(C_n).
\]
Point (a) follows since $|G(C_0) \otimes F(C_k)| \leq M^r$ and (b) since $\rk (G(C_0) \otimes F(C_k) )\leq rm$.
\end{proof}

\begin{remark}\label{rem bdd tor}
As a consequence we see that if $G$ is a finite group and $M$ is an abelian group of rank $r$ then $\rk H_k(G;M) \leq r |G|^k $.
It follows that $|H_k(G;\ZZ)| \leq |G|^{(|G|^k)}$ for any $k \geq 1$ since $|G|$ annihilates $H_k(G;\ZZ)$.
Also, if $M$ is a finite group then $|H_k(G;M)| \leq |M|^{(|G|^k)}$.
\end{remark}

%%%***
%%%***
%%%***
\section{The Barrat-Federer spectral sequence}
\label{sec federer ss}

The purpose of this section is to prove Theorem \ref{thm federer ss} below.
It is a special case of the Barrat-Federer spectral sequence tailored to our purposes.

Recall that a space $Y$ is called \emph{simple} if it is path connected and for any choice of basepoint, $\pi_1Y$ acts trivially on $\pi_*Y$.
In this case basepoints can be ignored in the sense that the basepoint-change isomorphisms for $\pi_qY$ are canonical.
Put differently, if $q \geq 1$ then $\pi_qY$ can be identified with the set of (unpointed) homotopy classes of (unpointed) maps $S^q \to Y$, which in this case has a natural group structure.
Hence, if a group $G$ acts on $Y$ then $\pi_qY$ has a natural structure of a $G$-module.

For $G$-spaces $X$ and $Y$ let $\map_G(X,Y)$ denote the space of $G$-maps.
Write $\map_G(X,Y)_f$ for the path component of $f \colon X \to Y$ with $f$ as a basepoint.

\begin{thm}\label{thm federer ss}
Let $X$ be a finite dimensional polytope on which a finite group $G$ acts freely and simplicially.
Let $Y$ be a simple $G$-space and fix a $G$-map $f \colon X \to Y$.
Then there exists a second quadrant homological spectral sequence
\[
E^2_{-p,q}=H^{p}_G(X;\pi_q Y) \Rightarrow \pi_{q-p} \map_G(X,Y)_f, \qquad (0 \leq p \leq q).
\]
The differential in the $E^r$-page has degree $(-r,r-1)$ and the $E^\infty_{-p,q}$-terms, where $q-p=k$, are the quotients of a finite filtration 
%\[
%\F_{-d,k+d} \subseteq \dots \subseteq \F_{-2,k+2} \subseteq \F_{-1,k+1} \subseteq \F_{0,k}
%\]
of $\pi_k \map_G(X,Y)_f$. % where $d=\dim X$.
%where $\pi_qY$ have the trivial action of $G$.
\end{thm}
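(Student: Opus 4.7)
\emph{Proof plan.}
The plan is to filter $X$ by its $G$-equivariant skeleta, assemble the associated tower of restriction fibrations on mapping spaces, and identify the resulting homotopy spectral sequence with the Bredon cochain complex of $X$ with coefficients in the homotopy groups of $Y$.

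Let $X^{(0)} \subseteq \cdots \subseteq X^{(N)} = X$ be the skeletal filtration (so $N = \dim X$); each $X^{(p)}$ is $G$-invariant because $G$ acts simplicially. Put $M_p = \map_G(X^{(p)}, Y)_{f|_{X^{(p)}}}$. Since $X^{(p-1)} \hookrightarrow X^{(p)}$ is a $G$-cofibration, restriction produces a finite tower of Hurewicz fibrations
\[
\map_G(X, Y)_f \ = \ M_N \to M_{N-1} \to \cdots \to M_0.
\]
The fiber $F_p$ of $M_p \to M_{p-1}$ over $f|_{X^{(p-1)}}$ is the path-component (of $f|_{X^{(p)}}$) in the space of $G$-equivariant extensions across the $p$-cells. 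Because $G$ acts freely on $p$-simplices, choosing one representative $\sigma_\alpha$ per $G$-orbit $\alpha \in I_p$ identifies
\[
F_p \ \cong \ \prod_{\alpha \in I_p} \map\bigl((D^p, S^{p-1}), (Y, f|_{\partial \sigma_\alpha})\bigr),
\]
since an equivariant extension is uniquely determined by its values on the orbit representatives. Each factor has $\pi_k$ equal to $\pi_{p+k}(Y)$ via the homeomorphism $(D^p \times S^k)/(S^{p-1}\times S^k \cup D^p \times \{*\}) \cong \Sigma^k(D^p/S^{p-1}) = S^{p+k}$; simpleness of $Y$ renders these groups canonical in the basepoint. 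Since $C_p(X)$ is a free $\ZZ G$-module with basis $\{\sigma_\alpha\}_{\alpha \in I_p}$,
\[
\pi_k F_p \ \cong \ \prod_{\alpha \in I_p} \pi_{p+k}(Y) \ \cong \ \Hom_{\ZZ G}\bigl(C_p(X), \pi_{p+k}(Y)\bigr) \ = \ C^p_G(X; \pi_{p+k}Y).
\]

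I would then splice the long exact sequences of the fibrations $F_p \to M_p \to M_{p-1}$ into the exact couple associated with a finite tower of fibrations, yielding a homological spectral sequence with
\[
E^1_{-p, q} \ = \ \pi_{q-p}(F_p) \ = \ C^p_G(X; \pi_q Y)
\]
and differentials $d_r$ of bidegree $(-r, r-1)$. Because the tower has length $N+1$, convergence is strong and $\pi_k \map_G(X, Y)_f$ inherits a finite filtration (by the kernels of the restriction maps to the $\pi_k M_p$) whose subquotients are the $E^\infty_{-p, q}$ in the diagonal $q - p = k$.

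The key technical step is identifying $d_1\colon E^1_{-p, q} \to E^1_{-p-1, q}$ with the Bredon coboundary $\delta = \Hom_{\ZZ G}(\partial, \pi_q Y)$. Unwinding the exact couple, $d_1$ is the composite $\pi_{q-p}(F_p) \to \pi_{q-p}(M_p) \xrightarrow{\partial} \pi_{q-p-1}(F_{p+1})$ of the fiber inclusion with the connecting map of $M_{p+1} \to M_p$. On a class represented by an extension $\xi$ of $f|_{X^{(p-1)}}$ to $X^{(p)}$, the connecting map records, cell by cell, the obstruction to extending $\xi$ across each $(p+1)$-simplex $\tau$: namely the class of $\xi \circ \phi_\tau \colon \partial\tau \cong S^p \to Y$, which through the simplicial decomposition of $\partial \tau$ equals the alternating sum of the restrictions of $\xi$ to the codimension-one faces of $\tau$. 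This is precisely $\delta$ applied cell-wise in $\Hom_{\ZZ G}(C_{p+1}(X), \pi_q Y)$, whence $E^2_{-p, q} = H^p_G(X; \pi_q Y)$, as required. The remainder of the argument is formal from the tower-of-fibrations machinery.
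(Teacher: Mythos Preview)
Your argument is correct and is the classical Federer-style construction via the skeletal filtration, but it is a genuinely different route from the paper's proof. The paper does not filter by skeleta; instead it builds a poset $D$ whose objects are the $G$-orbits of simplices of $X$ (of all dimensions at once), observes that the tautological diagram $\mathcal{X}\colon D\to G\text{-}\mathbf{Spaces}$ is Reedy cofibrant with $\operatorname{colim}_D\mathcal{X}=X$, and then obtains the spectral sequence as the Bousfield--Kan spectral sequence of the homotopy limit $\map_G(X,Y)\simeq\operatorname{holim}_{D^{\mathrm{op}}}\map_G(\mathcal{X},Y)$. The $E^2$-identification is done by a separate (collapsing) Bousfield--Kan spectral sequence that shows $H^p_G(X;M)\cong\lim^p_{D^{\mathrm{op}}}\Hom_{\ZZ G}(\ZZ[\pi_0\mathcal{X}],M)$, so the paper never has to identify $d_1$ by hand. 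Your approach is more elementary and self-contained, avoiding the homotopy-limit machinery; its cost is the explicit $d_1$ computation, which you sketch but which requires some care (the faces of a $(p{+}1)$-simplex $\tau$ need to be translated back to the chosen orbit representatives, and this is where the $G$-module structure on $\pi_qY$ enters). The paper's approach, by contrast, makes naturality with respect to sub-polytopes $X'\subseteq X$ immediate from functoriality of the Bousfield--Kan construction (their Remark after the theorem), which is needed later; in your setup naturality also holds, since the skeletal tower is functorial in simplicial inclusions, but you should say so explicitly. One small point: it is cleaner to take $M_p=\map_G(X^{(p)},Y)$ as the full mapping space with basepoint $f|_{X^{(p)}}$ rather than its path component, so that the restriction maps are honest fibrations with the obvious fibres.
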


\begin{proof}
By possibly passing to the barycentric subdivision, we may assume that if $\Sigma \subseteq X$ is a simplex then $g\Si \cap \Si = \emptyset$ for any $1 \neq g \in G$.
Thus, if $\si$ is the orbit of a $k$-simplex in $X$ then $\si \cong G \times \De^k$.
Let $D$ be the subposet of the poset of all the $G$-invariant compact subspaces of $X$ whose objects are the orbits of the simplices of $X$.
We obtain a functor 
\[
\X \colon D \to G\operatorname{-}\spaces, \qquad \X \colon \si \mapsto \si.
\]
If $\si, \si' \in D$ and $\si \cap \si' \neq \emptyset$ then $\si \cap \si'$ is the orbit of a simplex which is the intersection of some simplices $\Si \subseteq \si$ and $\Si' \subseteq \si'$.
It follows that $X=\ccolim{D}\X$ and that $\X$ is Reedy cofibrant, \cite[Section 22]{DHKS}.
Therefore, the following natural map is a homotopy equivalence.
\begin{equation}\label{eq:colimx}
\hhocolim{D} \X \xto{\simeq}  X
\end{equation}
Since $G$ acts freely on both space, this is, in fact, a $G$-homotopy equivalence.
Therefore, for any $G$-module $M$ we obtain a Bousfield-Kan spectral sequence 
\[
E_2^{i,j} = \llim{D^{\op}}{}^i \, H_G^j(\X;M) \ \Rightarrow \  H^{i+j}_G(X;M).
\]
Every $\si \in D$ has the form $G \times \De^k$ so the natural transformation of functors $\X \to \pi_0\X$ has the property that $\X(\si) \to \pi_0\X(\si)$ are $G$-equivalences.
By the properties of $H_G^*(-;M)$ we obtain an isomorphism %for any $G$-set $\Omega$ there are natural isomorphisms of functors $H^*_G(\Omega;M) \cong \Hom_{\ZZ G}(\ZZ[\Omega],M)$ where $\ZZ[\Omega]$ is the free abelian group with basis $\Omega$.
\[
H^j_G(\X;M) \cong H_G^j(\pi_0\X;M) \cong 
\left\{
\begin{array}{ll}
\Hom_{\ZZ G}(\ZZ[\pi_0\X], M) & \text{if } j= 0 \\
0 & \text{if } j>0
\end{array}
\right.
\]
where $\ZZ[\pi_0\X] \colon D \to \Ab$ is the functor $\si \mapsto \ZZ[\pi_0\X(\si)]$.
The Bousfield-Kan spectral sequence collapses and we obtain the isomorphism
\begin{equation}\label{eq:limphg}
H^p_G(X;M) = \lim{}_{\mdl{D^{\op}}}^p \, \Hom_{\ZZ G}(\ZZ[\pi_0\X],M), \qquad (p \geq 0)
\end{equation}
By applying the functor $\map_G(-,Y)$ to $\X$ we obtain a functor
\[
\map_G(\X,Y) \colon D^\op \to \spaces.
\]
The inclusions $\X(\si) \subseteq X$ give rise to $\map_G(X,Y) \to \map_G(\X(\si),Y)$ which carry $f$ to $f|_{\si}$.
Thus, the map $f$ gives rise to a consistent choice of basepoints in the functor $\map_G(\X,Y)$.
In other words, this functor can be viewed as a functor of \emph{pointed} spaces.
Also, \eqref{eq:colimx} implies the homotopy equivalence
%\begin{multline*}
\[
\map_G(X,Y) \simeq 
\map_G(\hhocolim{D}\, \X,Y) = \hholim{D^{\op}} \, \map_G(\X,Y)
\]
%\end{multline*}
The category $D^\op$ is a poset of dimension $d=\dim X$, hence $\llim{D^{\op}}{}^p(-)$ vanishes if $p>d$.
Also $\pi_0Y=0$ and $\pi_1Y$ is abelian by assumption.
We can now apply \cite[XI.7.1 and IX.5.4 and X.7.1]{BK} to obtain a second quadrant homological spectral sequence
\[
E^2_{-p,q} = \lim{}_{\mdl{D^\op}}^{p} \, \pi_q \map_G(\X,Y) \, \Rightarrow \, \pi_{-p+q}\map_G(X,Y)_f, \quad (0 \leq p \leq q)
\]
with differentials of degree $(-r,r-1)$ in the $E^r$-page.
Since $E^2_{-p,q}=0$ for $p>d$ the $E^\infty_{-p,q}$-terms where $q-p=k$ are the filtration quotients of a filtration $\F_{-d,d+k} \subseteq \F_{-d+1,d+k-1} \subseteq \dots \subseteq \F_{0,k}$ of $\pi_k\map_G(X,Y)_f$.
It only remains to identify the $E^2$-page with the Bredon cohomology groups in the statement of the theorem.

Since $\X(\si)=G \times \De^k \simeq G$ it follows that $\map_G(\X(\si),Y) \simeq Y$ is path connected and simple.
Therefore $\pi_q \map_G(\X(\si),Y)$ vanishes if $q=0$, and for $q>0$ the basepoint $f|_{\si}$ is immaterial. 
For any $G$-set $\Omega$ and any $G$-module $M$ there is a natural isomorphism of abelian groups $\map_G(\Omega,M) \cong \Hom_{\ZZ G}(\ZZ[\Omega],M)$.
We therefore obtain natural isomorphisms for all $q \geq 1$
\begin{multline*}
\pi_q \big(\map_G(\X(\si),Y)_{f|_{\si}}\big) \cong 
[S^q,\map_G(\X(\si),Y) ] \cong  
%\pi_0\map_G(\pi_0\X,\map(S^q,Y)) \cong 
[\X(\si),\map(S^q,Y)]_G  \\
\cong
\map_G(\pi_0 \X(\si),[S^q,Y]) \cong
\Hom_G(\ZZ[\pi_0\X(\si)], \pi_qY),
\end{multline*}
and for $q=0$ this isomorphism is trivial.
The proof is now complete since we have shown in \eqref{eq:limphg} that $\llim{D^{\op}}{}^p \Hom_G(\ZZ[\pi_0\X],\pi_qY) \cong H_G^p(X;\pi_qY)$.
\end{proof}

\begin{remark}\label{remark bfss natural}
The spectral sequence is natural in the following sense.
If $X' \subseteq X$ is a sub-polytope we obtain inclusion of posets $D_{X'} \subseteq D_X$ and $\X'=\X|_{D_{X'}}$.
The naturality of the Bousfield-Kan spectral sequence used in the proof, shows that there is an induced morphism on the spectral sequences.
\end{remark}

\begin{remarkx}
It is possible to deduce Theorem \ref{thm federer ss} from \cite[Theorem 1.1]{Schu73} by observing that $\map_G(X,Y)$ is homeomorphic to the space of sections of the fibration $X \times_G Y \to X/G$ and that $\pi_1(X/G)$ acts on the fibre $Y$ via the action of $G$ (note that $X \to X/G$ is a covering projection since the action of $G$ is free).
One then identifies the cohomology groups with local coefficients in Schultz's result with the Bredon cohomology groups.
\end{remarkx}

%%%%%%
%%%%%%
%%%%%%
%%%
%%%
%%%
\section{The main result}
\label{sec main result}

Let $X$ be a compact polytope on which a finite group $G$ acts simplicially.
For every $n \geq 1$ let $X_n$ denote the $n$-fold join of $X$ with itself. 
The main result of this section is the following theorem.

\begin{thm}\label{thm pik ess bdd}
Let $X$ and $\{ X_n\}_n$ be as above.
Assume that for any $H \leq G$
\begin{itemize}
\item[(a)]
either $X^H$ is empty or it is homeomorphic to a sphere, and

\item[(b)]
the action of $N_GH$ on $H_*(X^H;\ZZ)$ is trivial. 
\end{itemize}
Then for any $k \geq 1$ the sequence $\{ \pi_k \aut_G(X^{*n})\}_n$ is essentially bounded.
\end{thm}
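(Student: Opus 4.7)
The plan is to apply the Barrat-Federer spectral sequence (Theorem \ref{thm federer ss}) inductively on the poset of isotropy subgroups, with Lemma \ref{lemma key} controlling the $E^2$-terms.

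I would begin by verifying that $X_n := X^{*n}$ satisfies \ref{key I}, \ref{key II}, \ref{key III}. Since $(X^{*n})^H = (X^H)^{*n}$ is either empty or a sphere of dimension $n(\dim X^H + 1) - 1$, and by hypothesis (b) the action on top homology is trivial, (I) holds. The vanishing of intermediate homology of high-dimensional spheres gives (II). For (III), if $H' \leq H$ and both $X^H, X^{H'}$ are nonempty, then either $\dim X^{H'} > \dim X^H$ (so the join-dimension difference grows linearly in $n$), or $\dim X^{H'} = \dim X^H$; in the latter case invariance of domain applied to the nested sub-polytopes forces $X^H = X^{H'}$, and hence $(X^H)^{*n} = (X^{H'})^{*n}$ for every $n$.

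I would then order the conjugacy classes of isotropy subgroups on $X^{*n}$ as $(K_0), \ldots, (K_r)$ compatibly with the subgroup relation and induct on the realized family of isotropies. At each step, the restriction fibration peeling off the $K_j$-stratum has fiber $W_j$-homotopy equivalent, via Lemma \ref{lem abdy} and deformation onto the complementary free stratum, to
\[
F_j := \map_{W_j}(B_n, Y_n;\ \text{identity on } D_n),
\]
where $W_j = N_G K_j / K_j$ acts freely on $B_n$ and $Y_n = (X^{*n})^{K_j}$, with $B_n, D_n$ as in Section \ref{sec key lemma}. The relative form of Theorem \ref{thm federer ss} then produces a second-quadrant spectral sequence
\[
E^2_{-p,q} = H^p_{W_j}(B_n, D_n;\ \pi_q Y_n) \ \Longrightarrow \ \pi_{q-p} F_j, \qquad 0 \leq p \leq q.
\]

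For $n \gg 0$, $Y_n$ is a sphere of dimension $N = n(\dim X^{K_j} + 1) - 1$, so $\pi_q Y_n$ vanishes for $q < N$, equals $\ZZ$ at $q = N$, and is a finite stable homotopy group of spheres for $q > N$. The cells contributing to $\pi_k F_j$ satisfy $q - p = k$, $q \geq N$, and $p \leq \dim B_n = N$, so only finitely many $E^2$-terms in the window $N - k \leq p \leq N$ contribute. Each such term is essentially bounded: the $\ZZ$-coefficient term at $(p,q) = (N-k,N)$ is controlled by combining the long exact sequence of the pair $(B_n, D_n)$ with Proposition \ref{prop:lefshetz duality} and parts (C), (D) of Lemma \ref{lemma key}, while the finite-coefficient terms at $q > N$ are bounded directly by Lemma \ref{lemma key}(B) and (C). Hence $\pi_k F_j$ is essentially bounded, and the induction transfers this bound to $\pi_k \aut_G(X^{*n})$. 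The principal obstacle is the top-dimensional term at $q = N$: controlling the a priori unbounded group $H^{N-k}_{W_j}(B_n, D_n; \ZZ)$ requires the full Lefschetz-duality machinery of Sections \ref{sec bredon cohomology}--\ref{sec key lemma}, and a secondary technical point is the careful identification of the fiber $F_j$ at each inductive stage as a $W_j$-mapping space with free action on the source.
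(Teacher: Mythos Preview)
Your outline is essentially the paper's proof: induct over the isotropy filtration, identify the fibre at each stage (via the pushout $Y_n = B_n \cup_{D_n} \overline{U_n}$) with the homotopy fibre of $\map_W(B_n,Y_n)\to\map_W(D_n,Y_n)$, and control it with Barrat--Federer spectral sequences fed by Lemma~\ref{lemma key}.

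The one genuine difference is how you analyse that fibre. You invoke a \emph{relative} Barrat--Federer spectral sequence with $E^2_{-p,q}=H^p_W(B_n,D_n;\pi_qY_n)$ and bound each term directly: by excision and Proposition~\ref{prop:lefshetz duality} the dangerous integral term $H^{N-k}_W(B_n,D_n;\ZZ)$ is isomorphic to $H_k^W(B_n;\ZZ)\cong H_k(W;\ZZ)$, finite for $k\ge 1$. The paper instead runs two \emph{absolute} spectral sequences (for $B_n$ and for $D_n$), where the $E^\infty_{-(N-k),N}$ terms are individually unbounded, and then invokes Lemma~\ref{lemma key}\ref{key D} to bound the kernel and cokernel of the comparison map $\theta^\infty$ between them. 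Your route is cleaner and makes part~\ref{key D} of Lemma~\ref{lemma key} unnecessary; the price is that Theorem~\ref{thm federer ss} is only stated and proved in absolute form, so you must either establish the relative version (same argument, filtering $B_n$ by skeleta relative to $D_n$) or derive it from the map of absolute spectral sequences---which is exactly what the paper does. Two minor corrections: your citation of Lemma~\ref{lemma key}\ref{key B} for the finite-coefficient relative cohomology is off (that part concerns Bredon \emph{homology} of $Y_n$ and $A_n$); what you actually need is Proposition~\ref{prop:lefshetz duality} together with part~\ref{key A}, or the long exact sequence of the pair combined with part~\ref{key C}. And the boundary condition on $F_j$ is ``restricts to the inclusion $D_n\hookrightarrow Y_n$'', not ``identity on $D_n$''.
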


Theorem \ref{thm main them aut} follows from Theorem \ref{thm pik ess bdd} and Proposition \ref{prop lin spheres good}.

%%%
%%%
%%%
\begin{prop}\label{prop lin spheres good}
Let $X$ be a linear sphere $S(V)$ in a complex representation $V$ of $G$.
Then $X$ satisfies the hypothesis of Theorem \ref{thm pik ess bdd}.
\end{prop}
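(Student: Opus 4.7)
The plan is to verify conditions (a) and (b) of Theorem \ref{thm pik ess bdd} for $X = S(V)$.

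First I would observe that for any $H \leq G$, the fixed subspace $V^H$ is a complex linear subspace of $V$, because the $G$-action on $V$ is $\CC$-linear. Choosing a $G$-invariant Hermitian inner product on $V$ (which exists by the usual averaging argument and is unique up to equivalence, as recalled in the abstract), its restriction to $V^H$ is a Hermitian inner product, so
\[
S(V)^H \;=\; S(V^H)
\]
is either empty (when $V^H = 0$) or the unit sphere of a complex vector space of some complex dimension $d = \dim_\CC V^H \geq 1$, hence homeomorphic to the odd sphere $S^{2d-1}$. This settles condition (a).

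For condition (b), I would note that $N_G H$ preserves $V^H$ setwise and acts on it by complex linear maps that preserve the restricted Hermitian inner product; equivalently, the action factors through the unitary group $U(V^H)$. The induced homeomorphism of $S(V^H) \cong S^{2d-1}$ is then the restriction of a unitary transformation $T$ of $V^H$, and its degree on top homology $H_{2d-1}(S^{2d-1}) \cong \ZZ$ equals the real determinant
\[
\det\nolimits_\RR (T) \;=\; \bigl|\det\nolimits_\CC (T)\bigr|^2 \;=\; 1,
\]
since $T$ is unitary. Hence $N_G H$ acts as the identity on $H_{2d-1}(S(V^H);\ZZ)$. The action on $H_0$ is automatically trivial (a sphere is path connected, when nonempty), and all intermediate homology of $S^{2d-1}$ vanishes, so $N_G H$ acts trivially on $H_*(S(V)^H;\ZZ)$ as required.

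There is no genuine obstacle here; the only point that requires care is the degree computation, which uses crucially that $V$ is a \emph{complex} representation (the analogous statement fails for real representations, where a fixed subspace is merely a real subspace and unitarity is replaced by orthogonality, in which case $\det_\RR T = \pm 1$). Thus both hypotheses of Theorem \ref{thm pik ess bdd} are satisfied for $X = S(V)$, completing the proof.
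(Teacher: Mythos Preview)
Your argument is essentially the paper's. Both proofs verify (a) via $S(V)^H = S(V^H)$. For (b) the paper argues that the $N_GH$-action on $S(V^H)$ factors through the path-connected group $U(V^H)$, so every element acts by a map homotopic to the identity; your explicit degree computation $\det_\RR T = |\det_\CC T|^2 = 1$ is an equivalent, slightly more hands-on way to reach the same conclusion. One point you omit: the setup of Theorem \ref{thm pik ess bdd} also requires $X$ to be a compact polytope on which $G$ acts simplicially, and the paper addresses this by observing that the $G$-action on $S(V)$ factors through $U(n)$, hence is smooth, and then invoking Illman's equivariant triangulation theorem \cite{Illman78}. You should add this sentence.
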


\begin{proof}
The action of $G$ on $S(V)$ factors through the action of $U(n)$ where $n=\dim V$.
Hence, $X$ is a sphere of dimension $2n-1$ on which $G$ acts smoothly.
By \cite{Illman78}, there is a triangulation of $X$ which renders the action of $G$ simplicial.
For any $H \leq G$ it is clear that $S(V)^H=S(V^H)$, so $X^H$ is either empty (if $V^H=0$) or it is a linear sphere, hence (a) holds. 
Also, $N_GH$ acts via isometries on $V^H$, namely the action factors through the unitary group $U(V^H)$ which is path connected, and therefore it acts on $S(V^H)$ via self-maps homotopic to the identity.
This proves (b).
\end{proof}

The next corollary to Theorem \ref{thm pik ess bdd} is a slight generalization of \cite[Lemma 2.6]{UY12}. % which avoids the assumption that $X$ from the beginning of this section is a linear sphere.
For any $n  \geq 1$ there is a map $\de_n \colon \aut_G(X) \to \aut_G(X^{*n})$ given by $\vp \mapsto \vp^{*n}$.

\begin{cor}\label{cor null maps}
Fix $k \geq 1$.
Under the hypotheses of Theorem \ref{thm pik ess bdd}, for any $N$ there is $n \geq N$ such that $\pi_k \aut_G(X) \xto{(\de_n)_*} \pi_k\aut_G(X^{*n})$ is the trivial homomorphism.
\end{cor}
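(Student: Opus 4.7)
My plan is to exploit the $H$-space structure on $\aut_G(X^{*n})$ (under composition) and then apply Theorem~\ref{thm pik ess bdd}.

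Observe that $\vp^{*n} = \prod_{i=1}^{n} j_i^{(n)}(\vp)$, where $j_i^{(n)}(\vp) := 1_X^{*(i-1)} * \vp * 1_X^{*(n-i)}$, and the $n$ factors commute under composition (they act on disjoint coordinates of the join). Since the $H$-space product becomes addition on $\pi_k$ for $k \geq 1$, for any $\alpha \in \pi_k \aut_G(X)$ we have
\[
(\de_n)_* \alpha \;=\; \sum_{i=1}^{n} (j_i^{(n)})_* \alpha \quad \text{in } \pi_k \aut_G(X^{*n}).
\]

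Next, I would show that all summands are equal. The maps $j_i^{(n)}$ and $j_1^{(n)}$ differ by conjugation by a permutation of the join factors, and such conjugation acts as the identity on $\pi_k$ provided the permutation lies in the identity component of $\aut_G(X^{*n})$. To secure this under the general hypotheses of Theorem~\ref{thm pik ess bdd}, I factor $\de_{2m} = \de_m^{(2)} \circ \de_2$, where $\de_m^{(2)}(\psi) := \psi^{*m}$, and run the same decomposition for $\de_m^{(2)}$; the conjugating elements are now swaps of two blocks of size $2$ inside $(X^{*2})^{*m}$. On each nonempty fixed-point set $(X^{*2m})^H = (X^H)^{*2m}$, a $2$-block swap interchanges two spheres of dimension $2 d_H + 1$ (with $d_H = \dim X^H$) and so has degree $(-1)^{(2 d_H + 2)^2} = +1$. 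Since all fixed-point sets are spheres, the classification of $G$-self-maps of a $G$-sphere by their fixed-point degrees gives that the $2$-block swap is $G$-homotopic to the identity, and therefore lies in the identity component of $\aut_G(X^{*2m})$. Collapsing the sum then yields
\[
(\de_{2m})_* \alpha \;=\; m \cdot \beta_\alpha \quad \text{for some } \beta_\alpha \in \pi_k \aut_G(X^{*2m}).
\]

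To finish, Theorem~\ref{thm pik ess bdd} supplies $M$ and $N_0$ with $|\pi_k \aut_G(X^{*n})| \leq M$ for all $n \geq N_0$, so every element of $\pi_k \aut_G(X^{*2m})$ has order dividing $M$ once $2m \geq N_0$. Given any $N$, pick $m$ to be a multiple of $M$ with $2m \geq \max(N, N_0)$: then $m\cdot \xi = 0$ for every $\xi \in \pi_k \aut_G(X^{*2m})$, and in particular $(\de_{2m})_* \alpha = 0$ for all $\alpha$, giving the desired conclusion with $n = 2m$. The main obstacle is the connectivity step—verifying that the conjugating permutations really lie in the identity component of $\aut_G(X^{*2m})$—which is precisely why the factorization through $\aut_G(X^{*2})$ is needed: only swaps of blocks of even size give fixed-point degrees uniformly equal to $+1$.
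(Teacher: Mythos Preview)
Your decomposition $(\de_n)_*\alpha = \sum_i (j_i^{(n)})_*\alpha$ via the $H$-space structure is exactly the paper's starting point. Where you diverge is in how you force the sum to vanish. The paper does \emph{not} try to place the conjugating permutations in the identity component. Instead it notes that conjugation gives an action of $C_n \leq \Si_n$ on $\Pi := \pi_k\aut_G(X^{*n})$, and since $|\Pi|\le M$ one has $\Aut(\Pi)\le \Si_M$, so this action factors through a quotient of $C_n$ of order dividing $M!$. Choosing $n$ with $(M!)^2 \mid n$ makes $|\ker(C_n\to\Aut(\Pi))|$ a multiple of $M!$, which annihilates $\Pi$; grouping $\sum_{\tau\in C_n}\tau_*\omega$ into cosets of this kernel then gives zero. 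Nothing about which component the permutations occupy is ever used, so the argument is entirely elementary given Theorem~\ref{thm pik ess bdd}.

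Your route, by contrast, needs the $2$-block transpositions to be $G$-homotopic to the identity, and this is where there is a real gap. Your degree computation is correct, but the sentence ``the classification of $G$-self-maps of a $G$-sphere by their fixed-point degrees gives that the $2$-block swap is $G$-homotopic to the identity'' is an appeal to the equivariant Hopf theorem, and that theorem is not free: it requires a gap hypothesis (codimension $\ge 2$ between distinct fixed-point spheres) and an obstruction-theoretic proof. Under the hypotheses of Theorem~\ref{thm pik ess bdd} the space $X$ need not be a linear sphere, so you cannot substitute an isotopy through a unitary group. It is true that $X^{*2m}$ satisfies the gap hypothesis for the relevant $m$ (the gaps are multiples of $2m$), so citing an appropriate version (e.g.\ tom Dieck's treatment of homotopy representations) and checking this would close the hole---but as written the step is an unjustified black box that the paper's argument simply avoids. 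There is also a small slip at the end: from $|\Pi|\le M$ you only know each element has order at most $M$, hence dividing $M!$, not dividing $M$; you must take $m$ a multiple of $M!$ (or $\operatorname{lcm}(1,\dots,M)$), not of $M$.
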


\begin{proof}
Recall that $X_n=X^{*n}$.
By Theorem \ref{thm pik ess bdd} there exists $M \geq 1$ such that $|\pi_k\aut_G(X_n)| \leq M$ for all $n \gg 0$.
We choose such an $n$ such that $n\geq N$ and $(M!)^2 | n$ and show that $\de_n$ satisfies the conclusion of the corollary.

The $n$-fold join $X^{*n}$ is the quotient space of $X^n\times \De^{n-1}$ where $\De^{n-1} \subseteq \RR^n$ is the standard $(n-1)$-simplex with the barycentric coordinates, by the equivalence relation $(x_1,\dots,x_n,t_1,\dots,t_n) \sim (x_1',\dots,x_n',t_1',\dots,t_n')$ if $t_i=t_i'$ for every $i$ and $x_i=x_i'$ unless $t_i=t_i'=0$.
The symmetric group $\Si_n$ acts $G$-equivariantly on $X_n=X^{*n}$ by permuting the factors of $X^n$.
The assignment $\vp \mapsto \si \circ \vp$ where $\si \in \Si_n$ and $\vp \in \aut_G(X_n)$ defines an action of $\Si_n$ on $\aut_G(X_n)$.
Since the component of $\id_{X_n}$ in $\aut_G(X_n)$ is an associative unital connected monoid, it is a simple space \cite[Corollary 3.6, p. 166]{Whitehead78} and hence $\pi_k \aut_G(X_n)$ (with the identity as basepoint) becomes a $\Si_n$-module.
We will write $\bullet$ for the monoidal operation in $\aut_G(X_n)$ and note that it is simply the composition of self-equivalences.
The group structure on $\pi_k\aut_G(X_n)$ coincides with the group structure on $[S^k,\aut_G(X_n)_{\id}]$ induced by the monoidal structure \cite[Theorem 5.21, p. 124]{Whitehead78}. 
% We restrict this action to the cyclic group $C_n$. % and fix a generator $\tau$ of $C_n$.

Consider the map $\iota \colon \aut_G(X) \to \aut_G(X_n)$ defined by $\vp \mapsto \vp * 1_X * \dots * 1_X$.
If $\theta \colon S^k \to \aut_G(X)$ represents an element in $\pi_k \aut_G(X)$, then by inspection $\de_n \circ \theta$ is equal to  $(\tau_1 \circ \iota \circ \theta)\bullet \dots \bullet (\tau_n \circ \iota \circ \theta)$, where $\tau_1,\dots \tau_n$ are the elements of the cyclic group $C_n \leq \Si_n$.
Therefore
\[
(\de_n)_*([\theta])=[\underset{\tau\in C_n}{\bullet} (\tau \circ \iota \circ \theta)]=
\sum_{\tau \in C_n} \tau_*(\underbrace{\iota_*([\theta])}_{\omega}) =
\sum_{\tau \in C_n} \tau_*(\omega).
\]
Set $\Pi=\pi_k\aut_G(X_n)$.
By hypothesis $|\Pi| \leq M$ and since it is a $C_n$-module, we have a homomorphism $\rho \colon C_n \to \Aut(\Pi) \leq \Si_M$.
The order of the kernel of $\rho$ must be divisible by $(M!)^2/M!=M!$ which annihilates $\Pi$.
Therefore
\[
(\de_n)_*([\theta]) = \sum_{\tau \in C_n/\ker \rho} |\ker \rho| \cdot \tau_*(\omega) = 0.
\]
This completes the proof.
\end{proof}

% We will now prove Theorem \ref{thm pik ess bdd}.

%%%
%%%
%%%
\begin{prop}\label{prop abc 123}
Assume that the sequence $\{ X_n \}_n$ of $G$-spaces defined in the beginning of the section satisfies the conditions of Theorem \ref{thm pik ess bdd}.
Then it satisfies the conditions \ref{key I}--\ref{key III} of section \ref{sec key lemma}.
Moreover, for any $H \leq G$ and any $n \geq 1$, $(X_n)^H=(X^H)^{*n}$ and, if $H' \leq H$ then either $(X_n)^H=(X_n)^{H'}$ for all $n$ if $X^H=X^{H'}$ or $\dim (X_n)^{H'} - \dim (X_n)^H \geq n$ if $X^H \neq X^{H'}$.
%
%Moreover, for any $H \leq G$ we have $X_n^H=(X^H)^{*n}$ which is either empty or homeomorphic to a sphere on which $N_GH$ act via self-equivalences homotopic to the identity.
\end{prop}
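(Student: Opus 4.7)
The plan is to reduce everything to the key topological identity $(X_n)^H = (X^H)^{*n}$. Since the $G$-action on the $n$-fold join $X^{*n}$ is the diagonal action on the $n$ factors, a point $[x_1,\dots,x_n;t_1,\dots,t_n] \in X^{*n}$ is $H$-fixed exactly when each $x_i$ with $t_i > 0$ lies in $X^H$; this is precisely the description of the $n$-fold join of $X^H$ sitting inside $X^{*n}$. Combining this identity with hypothesis (a) of Theorem \ref{thm pik ess bdd} and the standard fact $S^a * S^b \cong S^{a+b+1}$, one sees that $(X_n)^H$ is empty when $X^H=\emptyset$, and otherwise a sphere of dimension $n(d_H+1)-1$, where $d_H:=\dim X^H$.

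From this description I would verify the three conditions in turn. For \ref{key I}, spheres are automatically compact, connected, orientable homology manifolds. To check that $N_GH$ acts trivially on the top integral homology, I would iterate the $N_GH$-equivariant suspension isomorphism $\tilde H_{m+1}(\Sigma Z) \cong \tilde H_m(Z)$ together with the $N_GH$-equivariant homotopy equivalence $A*B \simeq \Sigma(A\wedge B)$; this identifies $H_{n(d_H+1)-1}((X^H)^{*n};\ZZ)$ equivariantly with the $n$-fold tensor product $H_{d_H}(X^H;\ZZ)^{\otimes n}$, on which $N_GH$ acts diagonally, hence trivially by hypothesis (b). Condition \ref{key II} is immediate: in degrees $1 \leq i \leq n(d_H+1)-2$ the sphere $(X_n)^H$ has vanishing homology, and this range contains any fixed $i\geq 1$ for all $n$ sufficiently large.

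For \ref{key III} and the dimension inequality in the ``moreover'' clause, I would use the standard containment $X^H \subseteq X^{H'}$ for $H' \leq H$. If $X^H = X^{H'}$, the key identity immediately gives $(X_n)^H = (X_n)^{H'}$ for every $n \geq 1$. Otherwise, I claim $d_H < d_{H'}$ with the convention $\dim\emptyset = -1$: indeed, if both $X^H$ and $X^{H'}$ were non-empty spheres of the same dimension $d$, the closed embedding $X^H\hookrightarrow X^{H'}$ would by invariance of domain be open as well, and by connectedness (for $d \geq 1$) or the fact that a single point is not a sphere (for $d = 0$) would force equality. Hence
\[
\dim (X_n)^{H'} - \dim (X_n)^H \;=\; n(d_{H'}-d_H) \;\geq\; n,
\]
which delivers both the quantitative bound in the ``moreover'' clause and case (i) of condition \ref{key III}.

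The only step requiring any genuine care is the $N_GH$-equivariance of the iterated suspension/K\"unneth isomorphism used in \ref{key I}; everything else is a direct calculation from the explicit sphere description of $(X_n)^H$.
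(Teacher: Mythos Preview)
Your proof is correct and follows the same overall structure as the paper's: the identity $(X_n)^H=(X^H)^{*n}$, the sphere dimension formula, and the invariance-of-domain argument for \ref{key III} are all exactly what the paper does.

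The one genuine difference is in verifying the triviality of the $N_GH$-action on the top homology for condition \ref{key I}. You go through an equivariant K\"unneth/suspension identification $H_{\mathrm{top}}((X^H)^{*n})\cong H_{d_H}(X^H)^{\otimes n}$. The paper instead argues more directly: hypothesis~(b) says each $g\in N_GH$ acts on the sphere $X^H$ by a degree-$1$ map, hence by a map homotopic to the identity; joining $n$ such maps gives a self-map of $(X^H)^{*n}$ homotopic to the identity, so the induced action on \emph{all} homology (in particular the top) is trivial. This route is shorter and avoids the one point you flagged as ``requiring genuine care'', namely the $N_GH$-equivariance of $A*B\simeq\Sigma(A\wedge B)$, which is slightly delicate in the absence of equivariant basepoints. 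Your argument can be made precise (e.g.\ via the natural join--homology K\"unneth isomorphism, which needs no basepoints), but the paper's degree argument is the cleaner way to close this step.
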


\begin{proof}
By construction of the join, $(X_n)^H=(X^{*n})^H=(X^H)^{*n}$.
By hypothesis (a) it is either empty if $X^H=\emptyset$ or it is homeomorphic to a sphere of dimension $nr+n-1$ if $X^H \cong S^r$.
%Therefore $(X_n^H$ is either empty or it is a sphere of dimension $nr+n-1$ so condition \ref{key II} follows.
%Hence condition \ref{key II}(i) follows.
By hypothesis (b) $N_GH$ acts on the sphere $X^H$ via maps of degree $1$, hence maps which are homotopic to the identity. % ($\pi_n S^n=\ZZ$).
Therefore $N_GH$ acts on $(X_n)^H$ via self-equivalences which are homotopic to the identity.
Conditions \ref{key II} and \ref{key I} follow.
%
%If $X^H$ is empty then $X_n^H$ are empty and if $X^H$ is not empty then it is a sphere of dimension $r$ so $X_n^H$ is a sphere of dimension $nr+n-1$.
% Condition \ref{key II}(ii) follows.

Suppose that $H' \leq H$.
If $X^H \subseteq X^{H'}$ are spheres of different dimensions, then 
$\dim (X_n)^{H'}-\dim (X_n)^H=n(\dim X^{H'}-\dim X^H) \geq n$.
%
%the co-dimension of $(X_n)^H \subseteq (X_n)^{H'}$ is at least $n$ and \ref{key III}(i) holds.
If they have the same dimension then they must be equal by the invariance of domain.
\end{proof}

\begin{proof}[Proof of Theorem \ref{thm pik ess bdd}]
The groups $\pi_1\aut_G(X_n)$ are abelian since $\aut_G(X_n)$ are associative unital monoids.
For any subgroup $H \leq G$ let $X_n^{(H)}$ denote the subspace of $X_n$ which consists of the points $x \in X_n$ whose isotropy group $G_x$ is conjugate to $H$.
Arrange the conjugacy classes of the subgroups of $G$ in decreasing order $(H_1), (H_2), \dots ,(H_m)=(e)$, namely if $i>j$ then $|H_i| \leq |H_j|$.
We will now define filtrations $F_0(X_n) \subseteq \dots \subseteq F_m(X_n)$ of the spaces $X_n$.
For every $s \geq 0$ set
\[
F_s(X_n) = \bigcup_{i=1}^s X_n^{(H_i)}.
\]
We will prove by induction on $s$ that for any $k \geq 1$ the sequence of groups 
\begin{equation}\label{eq induct}
\{ \pi_k \map_G(F_s(X_n),X_n)_{\incl} \}_{n =1}^\infty  \qquad \text{is essentially bounded.}
\end{equation}
% is essentially bounded.
This will complete the proof since $F_m(X_n)=X_n$.
If $s=0$ then $F_s(X_n)$ are empty and \eqref{eq induct} is trivial.
We now fix some $1 \leq s \leq m$ and prove the induction step. % for $\map_G(F_s(X_n),X_n)_{\incl}$.

Set $K:=H_s$ and $W=N_G(K)/K$.
Set $Y_n=X_n^K$ and $A_n= X_n^{>K}$ both viewed as $W$-spaces.
By possibly passing to the barycentric subdivision of $X_n$ we may assume that $Y_n$ and $A_n$ are sub-polytopes of $X_n$ on which $W$ acts simplicially.
Remark \ref{rem x>K 1} implies that $(Y_n)^{>e} \subseteq A_n$.
Also, $A_n=F_{s-1}(X_n)^{K}$ and $Y_n=F_s(X_n)^K$ because for any $x \in X_n$ we must have $x \in Y_n$ (resp. $x \in A_n$) if and only if $(G_x)=(H_i)$ for some $i \leq s$ (resp. $i<s$). %$G_x>K$ (resp. $G_x \geq K$) if and only if $(G_x)=(H_i)$ for some $i<s$ (resp. $i \leq s$).
%any $x \in X_n^{>K}$ (resp. $x \in X_n^K$) we must have $G_x>K$ (resp. $G_x \geq K$) so $(G_x)=(H_i)$ for some $i<s$ (resp. $i \leq s$).

Since $G \times_{N_GK} (G/K)^K=G/K$ and since $F_s(X_n)\backslash F_{s-1}(X_n)$ consists only of orbits isomorphic to $G/K$, we obtain the following pushout square in which the vertical maps are inclusion of $G$-CW complexes, in fact inclusions of $G$-simplicial complexes, hence $G$-cofibrations:
\[
\xymatrix{
G \times_{N_GK} F_{s-1}(X_n)^K \ar[rr]^(0.6){\ev \colon (g,x) \mapsto gx} \ar@{^(->}[d] & &
F_{s-1}(X_n)  \ar@{^(->}[d] \\
G \times_{N_GK} F_s(X_n)^K \ar[rr]^(0.6){\ev\colon (g,x) \mapsto gx}  & &
F_s(X_n).
}
\]
Suppose first that $X^K=X^H$ for some $H>K$.
By Proposition \ref{prop abc 123}, $(X_n)^K=(X_n)^H$ for all $n$ and therefore $A_n=Y_n$ for all $n$.
It follows from the pushout diagram that $F_s(X_n)=F_{s-1}(X_n)$ for all $n$ and the induction step for \eqref{eq induct} follows from the induction hypothesis on $s-1$.

Hence we will assume from now on that $X^K \neq X^H$ for all $H>K$, so by Proposition \ref{prop abc 123} $\dim Y_n - \dim A_n  \geq n$ and in particular $A_n \subsetneq Y_n$ for all $n$.
By applying $\map_G(-,X_n)$ to the pushout diagram above, we obtain the following pullback diagram in which the horizontal arrows are fibrations
\begin{equation}\label{mainth:e1}
\xymatrix{
\map_G(F_s(X_n),X_n) \ar@{->>}[r] \ar[d] &
\map_G(F_{s-1}(X_n),X_n) \ar[d] 
\\
\map_{N_GK}(Y_n,X_n) \ar@{->>}[r] &
\map_{N_GK}(A_n,X_n).
}
\end{equation}
This is therefore a homotopy pullback square so the fibres of the rows are weakly homotopy equivalent (in fact, they are homeomorphic).
Let $F'_n$ denote the fibre of the bottom row over the inclusion $i_{A_n}^{X_n}$ and with the inclusion $i_{Y_n}^{X_n}$ as a basepoint.
By induction hypothesis $\{\pi_k\map_G(F_{s-1}(X_n),X_n)_{\incl}\}_n$ is essentially bounded for any $k \geq 1$, and from the long exact sequence in homotopy of the fibration in the first row, it suffices to prove that the sequence $\{\pi_k F'_n\}_n$ is essentially bounded for any $k \geq 1$.

By possibly passing to the barycentric subdivision of each $Y_n$, we can choose, by Lemma \ref{lem abdy}, for any $n$ a $W$-invariant neighbourhood $U_n$ of $A_n$ in $Y_n$ such that $\overline{U_n}$ and $B_n:=Y_n \backslash U_n$ are sub-polytopes of $Y_n$, and the inclusions $A_n \subseteq U_n \subseteq \overline{U_n}$ and $Y_n \backslash \overline{U_n} \subseteq B_n \subseteq Y_n \backslash A_n$ are $W$-equivalences.
Set $D_n=\overline{U_n} \cap B_n$.

Since $A_n$ and $Y_n=X_n^K$ are fixed by $K$, the bottom row of \eqref{mainth:e1} is the same as the fibration  $\map_W(Y_n,Y_n) \to \map_W(A_n,Y_n)$.
Also, $A_n \subseteq \overline{U_n}$ is a $W$-homotopy equivalence and therefore $F'_n$ is homotopy equivalent to the fibre $F_n$ of the following fibration over $\incl \in \map_W(\overline{U_n},Y_n)$ 
\[
F_n \to \map_W(Y_n,Y_n) \to \map_W(\overline{U_n},Y_n).
\] 
We will complete the proof by showing that $\{\pi_k F_n\}_n$ is essentially bounded for any $k \geq 1$.

By construction, %$Y_n=B_n \cup \overline{U_n}$ and $D_n=B_n \cap \overline{U_n}$.
$Y_n$ is the pushout of the sub-complexes $B_n$ and $\overline{U_n}$ along $D_n$.
By applying $\map_W(-,Y_n)$ we obtain a pullback diagram in which all the arrows are fibrations.
\[
\xymatrix{
\map_W(Y_n,Y_n) \ar@{->>}[r] \ar[d] &
\map_W(\overline{U_n},Y_n) \ar[d] 
\\
\map_W(B_n,Y_n) \ar@{->>}[r] &
\map_W(D_n,Y_n)
}
\]
It is therefore a homotopy pullback square, hence $F_n$ is weakly equivalent to the fibre of the bottom row over the inclusion $i_{D_n}^{Y_n}$ (in fact the fibres are homeomorphic).
We obtain exact sequences of groups (one for each $n$)
\begin{multline*}
\pi_{k+1}\map_W(B_n,Y_n)_{\incl} \xto{\la_{k+1}} \pi_{k+1}\map_W(D_n,Y_n)_{\incl} \to \pi_k(F_n) \to 
\\
\to \pi_k\map_W(B_n,Y_n)_{\incl} \xto{\la_k} \pi_k\map_W(D_n,Y_n)_{\incl}.
\end{multline*}
We see that to complete the proof it is enough to show that for any $k \geq 1$ the sequences $\{\ker(\la_k)\}_n$ and $\{\coker(\la_{k+1})\}_n$ are essentially bounded.
This will be the goal of the remainder of the proof.

Since $A_n \supseteq Y_n^{>e}$, it follows that $W$ acts freely on $B_n$ and $D_n$.
By Proposition \ref{prop abc 123} and hypothesis (a) of the theorem, $Y_n=X_n^K$ are spheres of dimension $N=n\dim(X^K)+n-1$. 
In particular, $Y_n$ are simple spaces.
By appealing to Theorem \ref{thm federer ss}  we obtain Barrat--Federer spectral sequences of the form
\begin{eqnarray*}
E^2_{-p,q}(B_n) &=& H_W^{p}(B_n;\pi_q Y) \Rightarrow \pi_{q-p} \map_W(B_n,Y_n)_{\incl} \\
E^2_{-p,q}(D_n) &=& H_W^{p}(D_n;\pi_q Y_n) \Rightarrow \pi_{q-p} \map_W(D_n,Y_n)_{\incl} 
\end{eqnarray*}
By Remark \ref{remark bfss natural} the inclusions $D_n \subseteq B_n$ induce morphisms of spectral sequences 
\[
\theta^r \colon E^r_{*,*}(B_n) \to E^r_{*,*}(D_n).
\]
% Recall that $Y_n=S^N$ where $N=n\dim(X^K)+n-1$.
For dimensional reasons $E^2_{-p,*}(B_n)=0$ and $E^2_{-p,*}(D_n)=0$ if $p>N$.
Also $E^2_{*,q}(B_n)=0$ and $E^2_{*,q}(D_n)=0$ if $q<N$ because $Y_n$ is $(N-1)$-connected.

By Proposition \ref{prop abc 123}, $W$ acts trivially on $\pi_*Y_n$ since condition \ref{key I} of Section \ref{sec key lemma} holds.
Therefore the coefficient systems in the $E^2$-terms of these spectral sequences are trivial.
In addition, once $k$ is fixed then for every $0 \leq j \leq k$  the groups $\pi_{N+j} Y_n=\pi_{N+j} S^N$ enter the stable range provided $n \gg 0$ so we may assume that $\pi_{N+j} Y_n \cong \pi^S_{j}$, which are finite if $j\geq 1$.
By Proposition \ref{prop abc 123} we may apply Lemma  \ref{lemma key}\ref{key C} which implies that for any $N+1 \leq q \leq N+k$ and any $N-k \leq p \leq N$, the sequences $\{E^2_{-p,q}(B_n)\}_n$ and $\{E^2_{-p,q}(D_n)\}_n$ are essentially bounded.
It follows that $\{E^\infty_{-p,q}(B_n)\}_n$ and $\{E^\infty_{-p,q}(D_n)\}_n$ are essentially bounded for $p,q$ in this range.
The spectral sequences $E^r_{*,*}(B_n)$ and $E^r_{p,q}(D_n)$ are depicted in Figure \ref{fig spectral sequence 1}.
\begin{figure}
\includegraphics[scale=0.6]{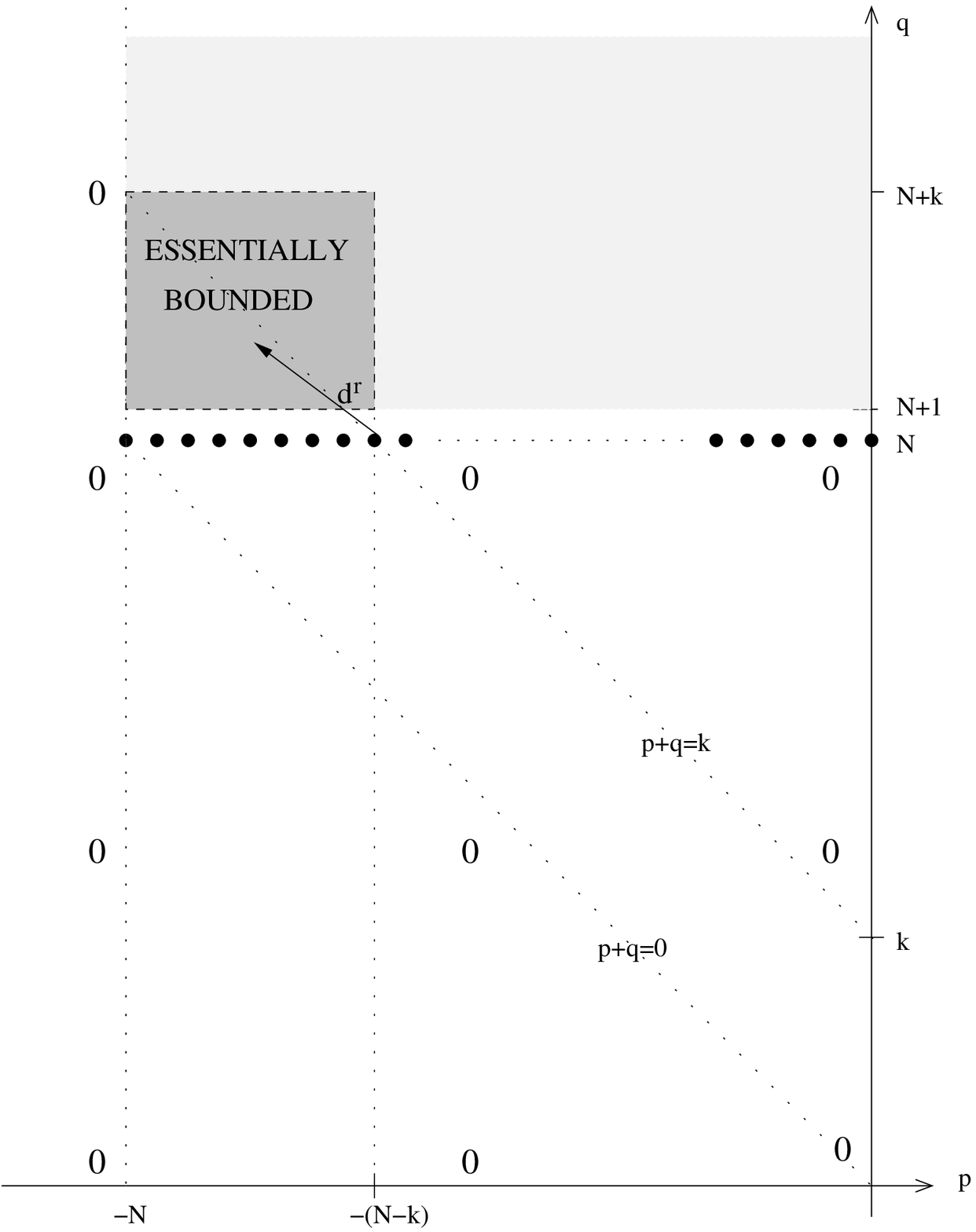}
\caption{The spectral sequences $E^r_{p,q}(B_n)$ and $E^r_{p,q}(D_n)$. 
Differentials have degree $(-r,r-1)$.}% The entries in the white area vanish.}
\label{fig spectral sequence 1}
\end{figure}
%
%Since $E^2_{-p,q}(B_n)$ and $E^2_{-p,q}(D_n)$ are essentially bounded groups for $N-k \leq p \leq N$ and $N+1 \leq q \leq N+k$, 
We deduce that $\pi_k \map_W(B_n,Y_n)_{\incl}$ has a filtration of length $k+1$ with  filtration quotients 
\[
E^\infty_{-(N-k),N}(B_n),\ E^\infty_{-(N-k+1),N+1}(B_n),\ \dots \ ,\ E^\infty_{-N,N+k}(B_n)
\] 
all except the first are essentially bounded (as sequences of groups indexed by $n$).
A similar statement holds for $\pi_k \map_W(D_n,Y)_{\incl}$.
We obtain the following diagram of short exact sequences (indexed by $n$)
\begin{equation}\label{mainth:e3}
\xymatrix{
0 \ar[r] &
\text{ess. bounded} \ar[r] \ar[d] &
\pi_k\map_W(B_n,Y_n) \ar[d]^{\la_k} \ar[r] &
E^\infty_{-(N-k),N}(B_n) \ar[d]^{\th^\infty_{-(N-k),N}} \ar[r] &
0
\\
0 \ar[r] &
\text{ess. bounded} \ar[r]  &
\pi_k\map_W(D_n,Y_n) \ar[r] &
E^\infty_{-(N-k),N}(D_n) \ar[r] &
0
}
\end{equation}
So we only need to show that $\{\ker(\th_{-(N-k),N}^\infty)\}_n$ is essentially bounded if $k \geq 1$ and that $\{\coker(\th_{-(N-k),N}^\infty)\}_n$ is essentially bounded if $k \geq 2$.
Since the spectral sequences vanish for $q<N$ and all the differentials $d^r_{-(N-k),N}$ have their target in essentially bounded groups, we obtain the following diagram of short exact sequences.
\[
\xymatrix{
0 \ar[r] &
E^\infty_{-(N-k),N}(B_n) \ar[d]^{\th^\infty_{-(N-k),N}} \ar[r] &
E^2_{-(N-k),N}(B_n) \ar[d]^{\th^2_{-(N-k),N}} \ar[r] &
\text{ess. bounded} \ar[d] \ar[r] &
0
\\
0 \ar[r] &
E^\infty_{-(N-k),N}(D_n) \ar[r] &
E^2_{-(N-k),N}(D_n)  \ar[r] &
\text{ess. bounded} \ar[r] &
0
}
\]
Lemma \ref{lemma key}\ref{key D} implies that if $k \geq 1$ then $\{\ker(\th^2_{-(N-k),N})\}_n$ is essentially bounded and therefore so is $\{\ker(\th_{-(N-k),N}^\infty)\}_n$.
By the same lemma, if $k \geq 2$ then the sequence of groups  $\{\coker(\th^2_{-(N-k-1),N})\}_n$ is essentially bounded and therefore so is $\{\coker(\th_{-(N-k),N}^\infty)\}_n$.
This completes the proof.
\end{proof}

%%%
%%% BIBLIOGRAPHY
%%%

\end{document}